\newtheorem{theorem}{Theorem}[section]
\newtheorem{prop}[theorem]{Proposition}
\newtheorem{lemma}[theorem]{Lemma}
\newtheorem{defn}[theorem]{Definition}
\newtheorem{claim}{Claim}
\theoremstyle{definition}
\newtheorem{remark}{Remark}
\newcounter{tenumerate}
\newenvironment{tenumerate}{\begin{list}{\bf\alph{tenumerate})}{\usecounter{tenumerate}}}{\end{list}}
\newcommand{\Ber}[1]{{\mathcal A_{#1}}}
\renewcommand{\epsilon}{\varepsilon}
\newcommand{\eps}{\varepsilon}
\renewcommand{\dim}{\mathsf{dim}}
\newcommand{\codim}{\mathsf{codim}}
\newcommand{\of}[1]{\left( #1 \right)}
\newcommand{\R}{{\mathbb R}}
\newcommand{\N}{{\mathbb N}}
\newcommand{\F}{{\mathbb F}}
\newcommand{\remove}[1]{}
\renewcommand{\le}{\leqslant}
\renewcommand{\ge}{\geqslant}
\renewcommand{\leq}{\leqslant}
\renewcommand{\geq}{\geqslant}
\newcommand{\sasha}[1]{{\tt \bf \% #1\%}}
\title{{\bf Almost Euclidean subspaces of
    $\ell_1^N$ via expander codes}\thanks{A preliminary version of this paper appeared in the {\em Proceedings of the 19th Annual ACM-SIAM Symposium
               on Discrete Algorithms}, January 2008.}}
\author{{Venkatesan Guruswami}\thanks{University of Washington, Department
      of Computer Science and Engineering, Box 352350, Seattle, WA 98195. Part of
    this work was done when the author was on leave at the School of
    Mathematics, Institute for Advanced Study, Princeton, NJ.
    Research supported in part by NSF CCF-0343672, a Packard Fellowship, and NSF grant CCR-0324906 to the IAS. {\tt venkat@cs.washington.edu}}
\and
{James R. Lee}\thanks{University
    of Washington, Department of Computer Science and Engineering,
    Seattle, WA 98195.  Research supported in part by NSF CCF-0644037.
{\tt jrl@cs.washington.edu}}
\and
{Alexander Razborov}\thanks{Institute for Advanced Study, School of Mathematics,
    Princeton, NJ and Steklov Mathematical Institute,
    Moscow, Russia. Current address: University of Chicago, Department of Computer Science, Chicago, IL 60637. {\tt razborov@cs.uchicago.edu}}}
\begin{document}
\date{}
\maketitle
\thispagestyle{empty}

\begin{abstract}
  We give an explicit (in particular, deterministic polynomial time)
  construction of subspaces $X \subseteq \R^N$ of dimension
  $(1-o(1))N$ such that for every $x \in X$,
  $$(\log N)^{-O(\log\log\log N)} \sqrt{N}\, \|x\|_2 \leq \|x\|_1 \leq \sqrt{N}\, \|x\|_2.$$
If we are allowed to use $N^{1/\log\log N}\leq N^{o(1)}$ random bits
and $\dim(X) \geq (1-\eta)N$ for any fixed constant $\eta$, the lower bound
can be further improved to $(\log N)^{-O(1)} \sqrt{N}\|x\|_2$.

\smallskip Through known connections between such Euclidean sections
of $\ell_1$ and compressed sensing matrices, our result also gives
explicit compressed sensing matrices for low compression factors for
which basis pursuit is guaranteed to recover sparse signals.  Our
construction makes use of unbalanced bipartite graphs to impose local
linear constraints on vectors in the subspace, and our analysis relies
on expansion properties of the graph. This is inspired by similar
constructions of error-correcting codes.
\end{abstract}

\smallskip
\noindent {\it Mathematics Subject Classification (2000) codes:} 68R05, 68P30,
51N20.

\newpage
\section{Introduction}

Classical results in high-dimensional geometry \cite{FLM77,kasin}
state that a random (with respect to the Haar measure)
subspace $X \subseteq \mathbb R^N$ of dimension $\epsilon N$ \cite{FLM77}
or even $(1-\epsilon)N$ \cite{kasin}
is an almost Euclidean section in $\ell_1^N$, in the sense
that $\sqrt N \|x\|_1$ and $\|x\|_2$ are within constant
factors, uniformly for every $x \in X$.
Indeed, this is a particular example of the use of
the probabilistic method, a technique which is now
ubiquitous in asymptotic geometric analysis.

On the other hand, it is usually the case that objects constructed
in such a manner are very hard to come by {\em explicitly.}
Motivated in part by ever growing connections with combinatorics
and theoretical computer science, the problem of explicit
constructions of such subspaces
has gained substantially in popularity over the last several years;
see, e.g. \cite[Sec. 4]{SzarekICM}, \cite[Prob. 8]{MilmanGAFA},
\cite[Sec. 2.2]{JShandbook}.
Indeed, such subspaces (viewed as embeddings)
are important for
problems like high-dimensional nearest-neighbor search \cite{Indyk1}
and compressed sensing \cite{donoho}, and one expects
that explicit constructions will lead, in particular,
to a better understanding of the underlying geometric structure.
(See also the end of the introduction for a discussion
of the relevance to compressed sensing.)

\subsection{Previous results and our contributions}
If one relaxes the requirement that $\dim(X) = \Omega(N)$ or allows a
limited amount of randomness in the construction, a number of results
are known.  In order to review these, we define the {\em distortion
  $\Delta(X)$ of $X \subseteq \mathbb R^N$} by
$$\Delta(X)= \sqrt{N} \cdot \max_{0 \neq x \in X} \frac{\|x\|_2}{\|x\|_1}.$$
In the first direction,
it is well-known that an explicit construction with distortion $O(1)$
and $\dim(X) = \Omega(\sqrt{N})$ can be extracted from
Rudin \cite{Rudin60} (see also \cite{LLR95}
for a more accessible exposition).
Indyk \cite{Indyk2} presented a deterministic
polynomial-time construction with distortion $1+o(1)$ and
$\dim(X) \geq \frac{N}{\exp(O(\log \log N)^2)}.$

Another very interesting line of research pursued by various authors and
in quite different contexts is to achieve, in the terminology of
theoretical computer science, a {\em partial derandomization} of the
original (existential) results. The goal is to
come up with a ``constructive'' {\em discrete} probabilistic measure on
subspaces $X$ of ${\Bbb R}^N$ such that a random (with respect to this measure)
subspace still has low distortion almost surely, whereas the entropy of this
measure (that is, the number of truly random bits necessary to sample from
it) is also as low as possible.

Denoting by $\Ber{k,N}$ a random $k \times N$ sign matrix
(i.e. with i.i.d. Bernoulli $\pm 1$ entries),
one can extract from the paper \cite{kasin} by Kashin that $\ker(\Ber{k,N})$,
a subspace of codimension at most $k$ has, with high probability,
distortion $\sqrt{N/k} \cdot\mathrm{polylog}(N/k)$.
Schechtman \cite{Sch81} arrived at similar conclusions
for subspaces generated by rows of $\Ber{N-k,N}$.
Artstein-Avidan and Milman \cite{AM06} considered again
the model $\ker(\Ber{k,N})$ and derandomized this further
from $O(N^2)$ to $O(N\log N)$ bits of randomness.
We remark that
the pseudorandom generator approach of Indyk \cite{Indyk1} can be used
to efficiently construct such subspaces using $O(N \log^2 N)$ random
bits.
This was further improved to $O(N)$ bits by Lovett and Sodin \cite{LS07}.
Subsequent to our work, Guruswami, Lee, and Wigderson \cite{GLW08} used the construction approach from this paper to reduce the random bits to $O(N^\delta)$ for any $\delta > 0$ while achieving distortion $2^{O(1/\delta)}$.

\medskip

As far as deterministic constructions with $\dim(X) = \Omega(N)$ are concerned,
we are aware of only one result;  implicit in various papers (see e.g. \cite{DS89}) is a subspace
with $\dim(X) = N/2$ and distortion $O(N^{1/4})$.  For $\dim(X) \geq 3N/4$, say,
it appears that nothing non-trivial was shown prior to our work.

\smallskip
Our main result is as follows.

\begin{theorem}\label{thm:construction}
For every $\eta = \eta(N)$, there is an explicit,
deterministic polynomial-time construction of subspaces $X \subseteq \mathbb
R^N$ with $\dim(X) \geq (1-\eta)N$ and distortion $(\eta^{-1}\log \log N)^{O(\log\log
N)}$.
\end{theorem}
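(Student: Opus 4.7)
The plan is to imitate the Sipser--Spielman expander code construction, but over $\R$ in place of $\F_2$. I would take an explicit, polynomial-time constructible unbalanced bipartite graph $G=(L,R,E)$ with $|L|=N$, $|R|=\lceil \eta N\rceil$, left-degree $d$, and the ``lossless'' expansion property that every $S\subseteq L$ with $|S|\leq k$ satisfies $|N(S)|\geq (1-\epsilon)d|S|$. The Capalbo--Reingold--Vadhan--Wigderson construction supplies such graphs with $d$ and $1/\epsilon$ polylogarithmic in $N$ and $k=\Omega(\eta N/d)$. Let $A\in\{0,1\}^{R\times L}$ be the biadjacency matrix of $G$ (signs are not needed for this argument, though random signs from an $\epsilon$-biased set can be used if they sharpen the analysis) and set $X=\ker(A)$. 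Then $\dim X\geq N-|R|\geq (1-\eta)N$ is automatic, and the burden is entirely on the distortion bound.

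Fix $x\in X$ with $\|x\|_2=1$; the goal is to prove $\|x\|_1\geq \sqrt{N}/D$ for $D=(\eta^{-1}\log\log N)^{O(\log\log N)}$. The basic combinatorial tool is the unique-neighbor consequence of expansion: whenever $|S|\leq k$, at least $(1-2\epsilon)d|S|$ right-vertices have exactly one neighbor in $S$, and each such right-vertex $r$ yields, via $(Ax)_r=0$, a bound $|x_{j_0}|\leq\sum_{j\in N(r)\setminus\{j_0\}}|x_j|$, where $j_0\in S$ is its unique neighbor. A standard double-counting (plus a short argument that the unique-neighbor witnesses cannot be too concentrated on a small subset of $S$) then promotes this pointwise bound to
\begin{equation*}
\|x_S\|_1 \leq \frac{C}{\epsilon}\,\|x_{L\setminus S}\|_1
\end{equation*}
for every $S\subseteq L$ with $|S|\leq k$, where $C$ is absolute.

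A single application of this inequality only yields the trivial distortion $O(\sqrt N)$, so the improvement must come from iterating across a hierarchy of level sets $S_\ell=\{j:|x_j|\geq 2^{-\ell}\|x\|_\infty\}$. I would bound the $\ell_2^2$-mass supported on the thin shell $S_\ell\setminus S_{\ell-1}$ using the expansion inequality at scale $\ell$, then substitute recursively. Since $|\{j:|x_j|\geq t\}|\leq t^{-2}\|x\|_2^2$ by Markov, the expansion regime $|S_\ell|\leq k$ is available once $\ell$ exceeds a threshold determined by $k$. Substituting the mass bound at scale $\ell$ back into the inequality roughly \emph{squares} the effective bad-set size downward at each step, so only $\Theta(\log\log N)$ iterations are needed to drive the problem down to its trivial base case. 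Each iteration contributes a multiplicative loss of order $\eta^{-1}\log\log N$ (from the factor $1/\epsilon$ and the number of right-vertices available), producing the telescoped distortion $D$.

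The hardest part is making the level-set iteration sharp. A naive analysis loses a polynomial factor per scale, which is useless; keeping the loss polylogarithmic requires pairing the expansion inequality with the $\ell_2$ tail bound in precisely the right way, so that the mass on $S_\ell\setminus S_{\ell-1}$ is controlled by the $\ell_1$-mass on $L\setminus S_\ell$ rather than merely by $\|x\|_1$. A secondary subtlety is that $k=\Omega(\eta N/d)$ is much smaller than $N$, so the largest ``bad'' sets initially fall outside the expansion regime and must be peeled down into it using the mass bound inherited from the preceding iteration. Balancing these two effects---the per-scale polylogarithmic loss and the need to reduce large sets to the expansion regime---is what produces the double-logarithmic exponent in the final distortion bound.
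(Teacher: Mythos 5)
Your proposal follows the Sipser--Spielman/Tanner template, which is indeed the paper's inspiration, but it omits the paper's central new idea and, as stated, cannot reach the claimed distortion. Two concrete gaps:

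\textbf{1. The missing inner code.} You impose a single parity-check constraint $\sum_{j\in N(r)} x_j=0$ at each right vertex (the LDPC special case). Over $\R$ this local constraint has essentially no $\ell_2$-spreading: the vector $(1,-1,0,\dots,0)$ satisfies it, so the local kernel is only $(1,\ 1/\sqrt{d_R})$-spread, where $d_R$ is the right-degree. The paper's spread-boosting theorem (Theorem~\ref{thm:pushdown}) shows that the amplification of the spread scale is $t\mapsto \frac{t}{D}\Lambda_G(T_0)$, which for $t=1$ and even a lossless expander with $\Lambda_G(T_0)\approx (1-\epsilon)DT_0$ gains only a $(1-\epsilon)$ factor --- i.e., nothing. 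The paper escapes this by replacing the parity check with an explicit $\of{\Theta(\sqrt{k}),\ \Theta(\sqrt{k/d})}$-spread ``inner'' subspace $L\subseteq\R^d$ built from Kerdock codes (Proposition~\ref{prop:kerdock}, Theorem~\ref{thm:local-subspace}), then boosts it once via sum-product expanders to $(d^{1/2+\beta_0},\cdot)$-spread (Theorem~\ref{thm:guv_boosting}). The condition $t\gg\sqrt{d}$ is exactly what makes the amplification per iteration a nontrivial power of $N/m$, which is what produces the $O(\log\log N)$ iteration count in the final recursion $t_i=N(\eps/N)^{(1-\beta_0)^i}$. Without the inner code there is no mechanism that delivers it.

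\textbf{2. The $\ell_1$ shelling doesn't give distortion.} Your key inequality $\|x_S\|_1\lesssim\epsilon^{-1}\|x_{\bar S}\|_1$ for $|S|\le k$ is the RIP-1/null-space property, which is sufficient for exact $k$-sparse recovery but strictly weaker than low $\ell_2/\ell_1$ distortion (low distortion implies recovery, not conversely --- see the discussion of \cite{KT07} in the paper's introduction). Feeding this inequality through the standard shelling over level sets gives at best $\|x\|_2\lesssim\|x\|_1$, i.e.\ the trivial $\Delta\lesssim\sqrt{N}$, and the ``squaring of bad-set size'' you invoke has no clear source: Markov gives $|S_\ell|\le 4^\ell$ regardless, and you never gain an $\ell_2$-local estimate that would let the mass on $S_\ell\setminus S_{\ell-1}$ be controlled by a fraction of the mass outside $S_\ell$. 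The paper's Definition~\ref{def:relative} (relative spreading) and Lemma~\ref{lem:iterate} are precisely the bookkeeping device that makes this iteration rigorous, and they operate in $\ell_2$, not $\ell_1$. Finally, even within the paper's machinery, CRVW graphs are discussed and found insufficient on their own (Section~\ref{sec:discussion}): their left-degree $D$ grows with $d$, and the paper needs both sum-product expanders (good for small sets, to build $L$) and spectral expanders at $O(\log\log N)$ different scales (good for large sets) to reach the stated bound. A single lossless expander $G$ does not suffice.
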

Like in \cite{kasin,AM06,LS07}, our space $X$ has the form $\ker(A_{k,N})$ for
a sign matrix $A_{k,N}$, but in our case this matrix is completely explicit (and, in particular, polynomial time computable). Its high-level overview is given in Section \ref{overview} below.

On
the other hand, if we allow ourselves a small number of random bits, then we
can slightly improve
the bound on distortion.

\begin{theorem} \label{thm:pseudorandom}
For every fixed $\eta >0$ there is a polynomial time algorithm using
$N^{1/\log\log N}$ random bits that almost surely produces a subspace $X
\subseteq \mathbb R^N$ with $\dim(X) \geq (1-\eta)N$ and distortion $(\log
N)^{O(1)}$.
\end{theorem}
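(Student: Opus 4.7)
The plan is to follow the same template as Theorem~\ref{thm:construction}---producing a subspace of the form $\ker(A_{k,N})$ via an iterated expander-code construction---but to use the small randomness budget to replace the ``base case'' of the recursion with a Kashin-type subspace in dimension $n_0 \df N^{1/\log\log N}$. The bound in Theorem~\ref{thm:construction} is essentially (per-level blowup)$^{\text{number of levels}}$, and the number of levels needed to reach $\mathbb R^N$ from a base of dimension $n_0$ scales like $\log(\log N/\log n_0)$; so starting the recursion from $n_0 = N^{\Omega(1/\log\log N)}$ rather than from $O(1)$ collapses $\Theta(\log\log N)$ levels down to $O(1)$, and this collapse is what makes the distortion only polylogarithmic.

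First, I would construct the randomized base. By Kashin's theorem \cite{kasin}, for $k_0 = \lceil \eta n_0/2 \rceil$ a uniformly random sign matrix $\Ber{k_0,n_0}$ has, with high probability, a kernel $X_0$ of codimension at most $k_0$ and distortion $D_0 = O(\log n_0) = O(\log N / \log\log N)$. Naively this needs $O(n_0^2)$ truly random bits, but invoking the pseudorandom generator of Lovett--Sodin~\cite{LS07} cuts this down to $O(n_0) = O(N^{1/\log\log N})$ bits, within our budget and still in polynomial time; the Artstein-Avidan--Milman derandomization \cite{AM06} would also fit, costing $O(n_0 \log n_0)$ bits. Since Kashin's failure probability is exponentially small in $n_0$, the small statistical error incurred by any of these generators is harmless.

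Second, I would feed $X_0$ into the expander-code lifting from the proof of Theorem~\ref{thm:construction}, viewing that lifting as a black box that converts a subspace of dimension $n$ with distortion $D$ into an explicit subspace of dimension $n^{\Theta(1)}$ with distortion $D \cdot (\log N)^{O(1)}$ and a controlled loss in the codimension fraction. Iterating this $O(1)$ times suffices to get from $n_0 = N^{1/\log\log N}$ up to $\mathbb R^N$, producing a subspace $X \subseteq \mathbb R^N$ with $\dim(X) \ge (1-\eta)N$ and distortion $D_0 \cdot (\log N)^{O(1)} = (\log N)^{O(1)}$, as desired. The codimension fraction is initially $\eta/2$ and degrades by only a constant factor per lifting step, so $O(1)$ steps leave it at most $\eta$ for $\eta$ fixed.

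The main obstacle is verifying that the explicit lifting in the proof of Theorem~\ref{thm:construction} is genuinely modular: its analysis (which leverages expansion of an unbalanced bipartite graph to propagate distortion bounds from the inner code to the full space) must continue to work when an arbitrary subspace $X_0$ of the correct dimension and distortion is supplied as the ``inner code'', rather than the specific base used in the fully deterministic construction. Once this modularity is established, tracking the $O(1)$ iterations of distortion blowup and codimension loss is a mechanical bookkeeping exercise on the parameters of the explicit expanders employed.
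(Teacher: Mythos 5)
Your high-level instinct --- spend the random bits on the inner subspace and lift it with an expander --- matches the paper, but the details diverge in ways that create genuine gaps. First, the paper does not need a pseudorandom generator: it picks the inner dimension $d$ to satisfy $d \le N^{1/(2\log\log N)}$ (via Theorem~\ref{thm:spectral}) so that a truly random $\lfloor\eta d/4\rfloor\times d$ sign matrix already uses at most $d^2 \le N^{1/\log\log N}$ bits. Your choice $n_0 = N^{1/\log\log N}$ overshoots by a square and forces the Lovett--Sodin detour. Relatedly, because $d/k = O(1/\eta) = O(1)$, Theorem~\ref{thm:random_spread} gives that $\ker A$ is $(\Omega(d),\Omega(1))$-spread, i.e.\ its distortion is $O(1)$, not the $O(\log n_0)$ you wrote (you seem to have plugged $k\approx 1$ into Kashin's bound rather than $k = \Theta(\eta n_0)$).

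Second, and more seriously, there is no $O(1)$-level recursion here and the ``black box that maps dimension $n$ to $n^{\Theta(1)}$ with a $(\log N)^{O(1)}$ distortion loss'' is not how the lifting works. The paper applies the Tanner construction $X(G,\ker A)$ \emph{exactly once}, jumping directly from $\R^d$ to $\R^N$. The polylogarithmic loss comes from a different place: Theorem~\ref{thm:pushdown} yields only a \emph{relative} spreading guarantee, and one must chain it via Lemma~\ref{lem:iterate} with $X_1=\cdots=X_r = X(G,\ker A)$ (the same fixed space) about $r=O(\log_d N) = O(\log\log N)$ times to climb the threshold from $T=1/2$ up to $T=\Omega(N)$, losing an $\Omega(1)$ factor each time for a total of $(\log N)^{-O(1)}$. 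Your proposed level count $\log(\log N/\log n_0) = \log\log\log N$ is not $O(1)$, and your bookkeeping tracks neither the relative-spreading chaining nor why each step costs only a constant (the latter crucially uses that the inner space is $\Omega(1)$-spread and that the graph has constant left-degree $D=4$). The modularity concern you raise at the end is actually a non-issue --- $X(G,L)$ is defined for any inner subspace $L$ and Theorem~\ref{thm:pushdown} needs only its spreading parameters --- but the level structure and the mechanism behind the $(\log N)^{O(1)}$ bound are.
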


\subsection{Proof techniques}

\subsubsection{Spreading subspaces} \label{initialize}

Low distortion of a section $X\subseteq {\Bbb R}^N$ intuitively means
that for every non-zero $x\in X$, a ``substantial'' portion of its
mass is spread over ``many'' coordinates, and we formalize this
intuition by introducing the concept of a {\em spread subspace}
(Definition \ref{def:spread}).  While this concept is
tightly related to distortion, it is far more convenient
to work with. In particular, using a simple spectral argument and
Kerdock codes~\cite{kerdock}, \cite[Chap. 15]{Mac-Sloane}, we
initialize our proof by presenting explicit subspaces with reasonably
good spreading properties.
These codes appeared also in the approach of Indyk \cite{Indyk2}, though
they were used in a dual capacity (i.e., as generator matrices instead of
check matrices).
In terms of distortion, however, this construction
can achieve at best $O(N^{1/4})$.

\subsubsection{The main construction}

The key contribution of our paper consists in exploiting the natural
analogy between low-distortion subspaces over the reals and
error-correcting codes over a finite alphabet.

Let $G = (\{1,2,\ldots,N\}, V_R, E)$ be a bipartite graph which is $d$-regular
on the right, and let $L \subseteq \mathbb R^d$ be any subspace.
Using the notation $\Gamma(j) \subseteq \{1,2,\ldots,N\}$ for the
neighbor set of a vertex $j \in V_R$,
we analyze the subspace
$$
X(G,L) = \{ x \in \mathbb R^N : x_{\Gamma(j)} \in L \textrm{ for every $j \in V_R$}\},
$$
where for $S \subseteq [N]$, $x_S \in \mathbb R^{|S|}$
represents the vector $x$ restricted to the coordinates
lying in $S$.  In other words, we impose local linear constraints (from $L$)
according to the structure of some bipartite graph $G$.
As Theorem \ref{thm:pushdown} shows, one can in particular analyze the spreading
properties of $X(G,L)$ in terms of those of $L$ and the expansion
properties of $G$.

\subsubsection{Putting it together: combinatorial overview} \label{overview}

Our final space $X$ will be of the form $X=\bigcap_{i=0}^{r-1} X(G_i,L_i)$ for suitably chosen $G_i, L_i$ (see
 the proof of Theorem \ref{thm:construction}). Combinatorially this simply means that we take $k_i\times N$ sign matrices $A_i$ such that $X(G_i,L_i)= \ker(A_i)$ and stack them on the top of one another to get our final matrix $A_{k,N}$. Moreover, every $A_i$ is a stack of $|V_R|$ copies of the sign matrix $A_i'$ with $\ker(A_i')=L_i$ in which every copy is padded with $(N-d)$ zero columns. The exact placement of these columns is governed by the graph $G$ that is chosen to satisfy certain expansion properties (Theorem \ref{thm:spectral}), and it is different in different copies.

And then we have one more level of recursion: Every $L_i$ has the form $X(G_i',L_i')$, where $G_i'$ again have certain (but this time different -- see Proposition \ref{prop:BKSSW}) expansion properties and $L_i'$ is our initial subspace (see Section \ref{initialize}).

\subsubsection{Connections to discrete codes}

Our approach is inspired by
{\em Low Density Parity Check Codes} (LDPC) introduced by Gallager
\cite{gallager-thesis}. They are particularly suited
to our purposes since, unlike most other explicit constructions in
coding theory, they exploit a {\em combinatorial} structure of the
parity check matrix and rely very little on the arithmetic of the
underlying finite field. Sipser and Spielman~\cite{SS96} showed that
one can achieve basically the same results (that is, simple and
elegant constructions of constant rate, constant relative minimal
distance codes) by considering adjacency matrices of sufficiently good
expanders instead of a {\em random} sparse matrix. These codes are
nowadays called {\em expander codes}. Using an idea due to
Tanner~\cite{tanner}, it was shown in \cite{SS96} (see also
\cite{zemor}) that even better constructions can be achieved by
replacing the parity check by a small (constant size) inner code.  Our
results demonstrate that analogous constructions work over the reals: If the
inner subspace $L$ has reasonably good spreading properties, then
the spreading properties of $X(G,L)$ are even better.
Upper bounds on distortion follow.

\subsection{Organization}
In Section \ref{sec:prelims}, we provide necessary background on
bipartite expander graphs and define spread subspaces.
In Section~\ref{sec:kerdock}, we
initialize our construction with an explicit subspace with reasonably
good spreading properties. In Section \ref{sec:boosting}, we describe
and analyze our main expander-based construction.
Finally, in Section \ref{sec:discussion}, we discuss why
improvements to our bounds may have to come from a source
other than better expander graphs.

\subsection{Relationship to compressed sensing.}
In \cite{DeVore07}, DeVore asks whether
probabilistically generated compressed sensing matrices can be
given by deterministic constructions.

The note \cite{KT07} makes the connection between distortion and
compressed sensing quite explicit.  If $M : \mathbb R^N \to \mathbb
R^{n}$ satisfies $\Delta(\ker(M)) \leq D$, then any vector $x \in
\mathbb R^N$ with $|\mathrm{supp}(x)| < \frac{N}{4 D^2}$ can be
uniquely recovered from its encoding $Mx$.  Moreover, given the
encoding $y = Mx$, the recovery can be performed efficiently by
solving the following convex optimization problem: $\min_{v \in
  \mathbb R^N} \|v\|_1$ subject to $Mv = y$.

In fact, something more general is shown.
Define, for $x \in \mathbb R^N$, the quantity
\begin{equation}\label{eq:approx}
\sigma_k(x)_1 = \min_{w \in \mathbb R^N : |\mathrm{supp}(w)| \leq k} \|x-w\|_1
\end{equation}
as the error of the best sparse approximation to $x$.
Then given $Mx$, the above algorithm recovers a vector $v \in \mathbb R^N$
such that $Mx = Mv$ and $\|x-v\|_2 \leq \frac{\sigma_k(x)_1}{\sqrt{k}},$
for $k = \Theta(N/D^2)$.  In other words, the recovery algorithm is {\em stable}
in the sense that it can also tolerate noise in the signal $x$, and
is able to perform approximate recovery even for signals which are
only approximately sparse.

Thus our results show the existence of a mapping
$M : \mathbb R^N \to \mathbb R^{o(N)}$, where $M$ is given
by an explicit matrix, and such that
any vector $x \in \mathbb R^N$ with $|\mathrm{supp}(x)| \leq \frac{N}{(\log N)^{C \log \log \log N}}$
can be efficiently recovered from $M x$ (the stable generalization
also holds, along the lines of \eqref{eq:approx}).
This yields the best-known explicit compressed sensing matrices for
this range of parameters (e.g. where $n \approx N/\mathrm{poly}(\log N)$).
Moreover, unlike probabilistic constructions, our matrices are quite sparse,
making compression (i.e., matrix-vector multiplication) and recovery
(via Basis Pursuit) more efficient.
For instance, when $n = N/2$, our matrices have only $N^{2-\varepsilon}$
non-zero entries for some $\varepsilon > 0$.
We refer to \cite{IndykCS} for explicit constructions that achieve a better
tradeoff for $n \approx N^{\delta}$, with $0 < \delta < 1$.  We remark
that the construction of \cite{IndykCS} is not stable in the sense
discussed above (and hence only works for actual sparse signals).

\section{Preliminaries}
\label{sec:prelims}

\subsection{Notation}

For two expressions $A,B$, we sometimes write $A \gtrsim B$ if $A =
\Omega(B)$, $A\lesssim B$ if $A=O(B)$, and we write $A \approx B$ if
$A=\Theta(B)$, that is $A \gtrsim B$ and $B \gtrsim A$.
For a positive integer $M$, $[M]$ denotes the set $\{1,2,\dots,M\}$.
The set of nonnegative integers is denoted by $\N$.
\subsection{Unbalanced bipartite expanders} \label{sec:expanders}

Our construction is based on unbalanced bipartite graphs with
non-trivial vertex expansion.

\begin{defn}
  A bipartite graph $G=(V_L,V_R,E)$ (with no multiple edges) is said to be
  an {\em $(N,n,D,d)$-right regular graph} if $|V_L| = N$, $|V_R| = n$, every
  vertex on the left hand size $V_L$ has degree {\em at most} $D$, and
  every vertex on the right hand side $V_R$ has degree {\em equal} to $d$.
\end{defn}

For a graph $G = (V,E)$ and a vertex $v \in V$, we denote by
$\Gamma_G(v)$ the {\em vertex neighborhood} $\{ u \in V \mid (v,u) \in
E\}$ of $v$. We denote by $d_v = |\Gamma_G(v)|$ the degree of a vertex
$v$.  The {\em neighborhood} of a subset $S \subseteq V$ is defined by
$\Gamma_G(S) = \bigcup_{v \in S} \Gamma_G(v)$. When the graph $G$ is
clear from the context, we may omit the subscript $G$ and denote the
neighborhoods as just $\Gamma(v)$ and $\Gamma(S)$.

\begin{defn}[Expansion profile]
  The {\em expansion profile} of a bipartite graph $G=(V_L,V_R,E)$ is
  the function $\Lambda_G : (0,|V_L|] \to \N$ defined by
%  if for every $S \subseteq V_L$ with $|S| \le K$, the neighborhood
% $\Gamma_G(S) \subseteq V_R$ satisfies $|\Gamma_G(S)| \ge \alpha |S|$.
%More generally, we define the {\em expansion profile of $G$}
%by
$$
\Lambda_G(m) = \min\left\{|\Gamma_G(S)| : S \subseteq V_L, |S| \geq m \right\}.
$$
\end{defn}
%\noindent
Note that $\Lambda_G(m) = \min_{v \in V_L} d_v$ for $0 < m \le 1$.

For our work, we need {\em unbalanced} bipartite graphs with expansion
from the larger side to the smaller side.  Our results are based on
two known explicit constructions of such graphs.  The first one is to
take the edge-vertex incidence graph of a non-bipartite {\em spectral
  expander}\footnote{That is, a regular graph with a large gap between
  the largest and second largest eigenvalue of its adjacency matrix.}
such as a Ramanujan graph. These were also the graphs used in the work
on expander codes~\cite{SS96,zemor}. The second construction of
expanders is based on a suggestion due to Avi Wigderson. It uses a
result of Barak, et. al. \cite{BKSSW} based on sum-product estimates
in finite fields; see \cite[\S 2.8]{TVbook} for background on such
estimates.

For our purposes, it is also convenient (but not strictly necessary)
to have bipartite
graphs that are regular on the right. We begin by describing a simple
method to achieve right-regularity with minimal impact on the
expansion and degree parameters, and then turn to stating the precise
statements about the two expander constructions we will make use of in
Section~\ref{sec:boosting} to construct our explicit subspaces.

\subsubsection{Right-regularization}

\iffalse
The preceding theorem only yields graphs which are left-regular,
but not right-regular.
Below we
describe a simple method for obtaining a bipartite
right-regular  expander with minimal impact on the expansion and degree
parameters.
\fi

\begin{lemma}
\label{lem:right-regular}
  Given a graph $H = (V_L,V_R,E)$ with $|V_L| = N$,
  $|V_R| = n$, that is left-regular with each vertex in $V_L$ having
  degree $D$, one can construct in $O(ND)$ time an $(N,n',2D,d)$-right
  regular graph $G$ with $n' \le 2n$ and $d = \lceil \frac{ND}{n} \rceil$
  such that the expansion profiles satisfy
  $\Lambda_G(m) \geq \Lambda_H(m)$ for all $m>0$.
\end{lemma}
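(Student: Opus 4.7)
The strategy is to modify $H$ in two steps: first split each right vertex whose degree exceeds $d$ into copies of size at most $d$, then pad every copy of size less than $d$ up to exact size $d$ using dummy edges.

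For the splitting step, for each $v \in V_R$ with $d_v = |\Gamma_H(v)|$, partition $\Gamma_H(v)$ arbitrarily into $k_v = \lceil d_v/d \rceil$ disjoint blocks $B_{v,1},\ldots,B_{v,k_v}$ with $|B_{v,j}| = d$ for $j < k_v$ and $|B_{v,k_v}| \in \{1,\ldots,d\}$. Replace $v$ by copies $w_{v,1},\ldots,w_{v,k_v}$ with $\Gamma(w_{v,j}) := B_{v,j}$. Using $d \ge ND/n$, the new right side has size $\sum_v \lceil d_v/d\rceil \le ND/d + n \le 2n$. Left degrees are unchanged, and for any $S \subseteq V_L$ and any $v \in \Gamma_H(S)$, at least one block $B_{v,j}$ meets $S$; hence at least one copy $w_{v,j}$ lies in the new $\Gamma(S)$, so the expansion profile can only grow under this step.

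For the padding step, call a copy $w_{v,j}$ \emph{partial} if $|B_{v,j}| < d$; there is at most one such copy per $v$, hence at most $n$ in total. For each partial $w_{v,j}$ of current degree $d' < d$, I would add $d - d'$ new edges to left vertices chosen from $V_L \setminus B_{v,j}$, distributing them greedily so no left vertex receives more than $D$ dummy edges. This is feasible for two reasons: first, the total dummy demand is $\sum (d - d') \le n(d-1) = nd - n \le ND$ (since $nd = n\lceil ND/n\rceil \le ND + n$), which matches the $N \cdot D$ slots of available capacity on the left; second, each forbidden set $B_{v,j}$ has fewer than $d \le N$ vertices, so eligible left targets always exist. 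Since adding edges only enlarges $\Gamma_G(S)$, the expansion profile is still preserved.

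The resulting graph $G$ has right-degree exactly $d$, left-degrees at most $D + D = 2D$, $|V_R'| \le 2n$, and $\Lambda_G(m) \ge \Lambda_H(m)$; each edge is touched $O(1)$ times, giving $O(ND)$ running time. The main delicate point is the dummy-edge allocation in the padding step: the counting bound above is essentially tight in the worst case, so the greedy allocation has to be set up with a bit of care (for example, by processing partial copies in decreasing order of demand, or by invoking a Hall-type feasibility argument on the allocation instance) to rule out getting stuck at a left vertex that has already reached its cap of $D$ extra edges.
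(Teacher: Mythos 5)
Your proposal follows the same split-then-pad route as the paper, and the two differ only in how carefully the padding step is treated. The paper handles the padding with exactly the counting bound you state---at most $(d-1)n\le d_{\rm av}n=ND$ new edges to distribute among $N$ left vertices with room for $D$ each---and simply asserts the allocation is possible. That count is airtight only if one tolerates multiple edges; since the definition of a right-regular graph in the paper excludes them, and your proposal also excludes them by restricting dummy edges to $V_L\setminus B_{v,j}$, the concern you raise at the end is genuine and not addressed by the paper either. Your observation that ``eligible left targets always exist'' (because $|B_{v,j}|<d\le N$) is not by itself enough, since eligible targets may already be saturated. The Hall-type feasibility argument you gesture at can in fact be made to work, but the essential structural ingredient---which neither you nor the paper states---is that distinct remainder copies come from distinct right vertices of $H$, so any fixed left vertex $u$ lies in at most $\deg_H(u)=D$ of the forbidden sets $B_{v,j}$; this, combined with the total-capacity count, is what rules out an obstructing family. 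So: same approach, with your version being more scrupulous about simplicity of $G$, but both leave the feasibility of a simple-graph padding as an unverified assertion. It is worth noting that for the downstream uses of this lemma (the expansion profile, and the factor-$D$ overcounting in Theorem \ref{thm:pushdown}) multi-edges are actually harmless under a mild reinterpretation, which is presumably why the paper is casual; to match the stated definition literally, the extra care you propose is the right instinct, and you should supply the ``each left vertex lies in at most $D$ forbidden sets'' observation to close the argument.
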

\begin{proof}
  Let $d_{\rm av} = ND/n$ be the average right degree of the graph
  $H$ and let $d = \lceil d_{\rm av} \rceil$. Split each vertex $v \in
  V_R$ of degree $d_v$ into $\lfloor d_v/d \rfloor$ vertices of degree
  $d$ each, and if $d_v \mod d > 0$, a ``remainder'' vertex of degree
  $r_v = d_v \mod d$. Distribute the $d_v$ edges incident to $v$ to
  these split vertices in an arbitrary way. The number of newly
  introduced vertices is at most $\sum_{v \in V_R} d_v/d = n d_{\rm av}/d
  \le n$, so the number $n'$ of right-side vertices in the new graph
  satisfies $n' \le 2n$.

  All vertices except the at most $n$ ``remainder'' vertices now have degree
  exactly $d$. For each $v \in V_R$, add $d - r_v$ edges to the corresponding
  remainder vertex (if one exists). Since this step adds at most $(d-1)
  n \le d_{\rm av} n = N D$ edges, it is possible to distribute these
  edges in such a way that no vertex in $V_L$ is incident on more than
  $D$ of the new edges. Therefore, the maximum left-degree of the new
  graph is at most $2D$.

The claim about expansion is obvious --- just ignore the newly added
edges, and the splitting of vertices can only improve the vertex expansion.
\end{proof}

\subsubsection{Spectral expanders}

The next theorem converts non-bipartite expanders
to unbalanced bipartite expanders via the usual
edge-vertex incidence construction.

\begin{theorem} \label{thm:spectralfirst}
For every $d \geq 5$ and $N \geq d$, there exists an explicit
$(N'=\Theta(N),~ n,2, \Theta(d))$-right regular graph $G$
whose
expansion profile satisfies $\Lambda_G(m) \geq \min\left\{\frac{m}{2\sqrt{d}},\frac{\sqrt{2mN'}}{d}\right\}$.
\end{theorem}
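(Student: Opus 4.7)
The plan is to realize $G$ as the edge-vertex incidence graph of a $d$-regular near-Ramanujan graph. Concretely, I would start from an explicit $d$-regular graph $H$ on $n$ vertices, where $n$ is chosen so that $n \cdot d/2 = \Theta(N)$ (i.e., $n \approx 2N/d$), and with second eigenvalue $\lambda_2(H) \le 2\sqrt{d-1}$. Such graphs exist explicitly for infinitely many $d$ via the Lubotzky--Phillips--Sarnak or Margulis construction when $d-1$ is a prime power; for arbitrary $d \ge 5$ one handles the remaining values by an adjustment (e.g. rounding $d$ and a trivial multi-graph fix-up), since losing a constant factor in $\lambda_2$ only costs constants in the final bound. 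Then let $V_L = E(H)$, $V_R = V(H)$, with incidence as the bipartite edge set. This graph is automatically $(N', n, 2, d)$-right regular with $N' = nd/2 = \Theta(N)$: every edge of $H$ has exactly two endpoints, and every vertex of $H$ is incident to exactly $d$ edges. So no invocation of Lemma~\ref{lem:right-regular} is needed.

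The expansion bound follows from the expander mixing lemma applied to $H$. Given $S \subseteq V_L$ of size $m$, let $T = \Gamma_G(S) \subseteq V_R$; since each edge in $S$ has both endpoints in $T$, we have
$$
m \;\le\; e_H(T) \;\le\; \frac{d |T|^2}{2n} + \frac{\lambda_2(H)\,|T|}{2} \;\le\; \frac{d|T|^2}{2n} + \sqrt{d}\,|T|,
$$
where $e_H(T)$ is the number of $H$-edges inside $T$. Now split into two cases according to which term on the right dominates. If $\frac{d|T|^2}{2n} \ge \sqrt{d}\,|T|$, then $m \le \frac{d|T|^2}{n}$, so
$$
|T| \;\ge\; \sqrt{\frac{mn}{d}} \;=\; \frac{\sqrt{2mN'}}{d},
$$
using $N' = nd/2$. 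Otherwise $m \le 2\sqrt{d}\,|T|$, giving $|T| \ge m/(2\sqrt{d})$. Either way $|T| \ge \min\{m/(2\sqrt{d}),\, \sqrt{2mN'}/d\}$, which is exactly the desired profile.

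The main obstacle is cosmetic rather than substantive: ensuring an explicit $d$-regular spectral expander is available for every integer $d \ge 5$ with $\lambda_2 = O(\sqrt{d})$. This is standard (e.g.\ round $d$ down to the nearest $p+1$ with $p$ prime, and then pad right-degrees to $\Theta(d)$, which only affects hidden constants), but it is the one step where some care is needed if one wants \emph{every} $d \ge 5$ to be handled rather than an infinite family. The mixing lemma calculation itself is essentially forced once one observes that the two parameters in the $\min$ on the right-hand side of the theorem arise precisely from the two terms in the mixing-lemma upper bound on $e_H(T)$.
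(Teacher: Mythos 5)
Your proof is correct and follows essentially the same route as the paper: take the edge-vertex incidence graph of an explicit Ramanujan-type spectral expander (LPS) and bound $|\Gamma_G(S)|$ via the spectral count of edges induced on a small vertex set. The paper invokes the Alon--Chung lemma where you invoke the expander mixing lemma, but these are the same bound, and both proofs handle arbitrary $d\ge 5$ and $N$ by picking suitable primes $p\equiv 1\,(\bmod\,4)$ with $p+1\le d$ and $q\equiv 1\,(\bmod\,4)$ controlling the vertex count.
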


\begin{proof}
  Let $p,q$ be any two primes which are both congruent to 1 modulo 4.
  Then there exists an explicit $(p+1)$-regular graph $Y = (V, F)$
  with $\frac{q(q^2-1)}{4} \leq |V| \leq \frac{q(q^2-1)}{2}$ and such
  that $\lambda_2 = \lambda_2(Y) \leq 2 \sqrt{p}$, where
  $\lambda_2(Y)$ is the second largest eigenvalue (in absolute value)
  of the adjacency matrix of $Y$~\cite{LPS88}.  (See \cite[\S
  2]{SLW06} for a discussion of explicit constructions of expander
  graphs.)

Letting $n = |V|$, we define a $(\frac{(p+1)n}{2}, n, 2, p+1)$-right
regular bipartite graph $G = (V_L, V_R, E)$ where $V_L = F$,
$V_R = V$, and $(e,v) \in E$ if $v$ is an endpoint of $e \in F$.
To analyze the expansion properties of $G$, we use
the following lemma of Alon and Chung \cite{AC88}.

\begin{lemma}
\label{lem:alon-chung}
If $Y$ is any $d$-regular graph on $n$ vertices
with second eigenvalue $\lambda_2$, then
the induced subgraph on any set of $\gamma n$ vertices
in $Y$
has at most
$$
\of{\gamma^2 + \gamma \frac{\lambda_2}{d}}\frac{dn}{2}
$$
edges.
\end{lemma}

In particular, if $S \subseteq V_L$ satisfies $|S| \geq \gamma^2 (p+1) n$
and $|S| \geq 2 \gamma n \sqrt{p+1}$, then $|\Gamma_G(S)| \geq \gamma n$.
Stated different, for any $S \subseteq V_L$, we have
$$
|\Gamma_G(S)| \geq \frac{\min\left\{2\sqrt{|S| n},|S| \right\}}{2\sqrt{p+1}}
$$
Setting $N' = \frac{(p+1)n}{2}$, we see that
$\Lambda_G(m) \geq \min\left\{\frac{m}{2\sqrt{d}},\frac{\sqrt{2 N'm}}{d}\right\}$.

Now given parameters $d \geq 5$ and $N \geq d$, let $p$ be the largest
prime satisfying $p+1 \leq d$ and $p \equiv 1\,(\bmod\,4)$,
and let $q$ be the smallest prime satisfying $\frac{q(q^2-1)(p+1)}{8} \geq N$
and $q \equiv 1\,(\bmod\,4)$.
The theorem follows by noting that for all integers $m \geq 3$, there
exists a prime $p \in [m,2m]$ which is congruent to 1 modulo 4 (see
\cite{ErdosBertrand}).
\end{proof}

The expanders of Theorem~\ref{thm:spectralfirst} are already right-regular
but they have one drawback; we cannot fully control the number of
left-side vertices $N$. Fortunately, this can be easily circumvented with
the same Lemma~\ref{lem:right-regular}.

\begin{theorem}
\label{thm:spectral} For every $d \geq 5$ and $N \geq d$, there exists an
explicit $(N,n,4, \Theta(d))$-right regular graph $G$ which satisfies
$\Lambda_G(m) \geq \min\left\{\frac{m}{2\sqrt{d}},\frac{\sqrt{2Nm}}{d}\right\}$.
\end{theorem}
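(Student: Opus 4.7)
The plan is to derive Theorem \ref{thm:spectral} from Theorem \ref{thm:spectralfirst} by trimming the left side to exactly $N$ vertices and then restoring right-regularity via Lemma \ref{lem:right-regular}. The paragraph immediately preceding the theorem statement essentially advertises this route, so the task reduces to checking that none of the degree or expansion parameters degrade under these two operations.

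First, I would invoke Theorem \ref{thm:spectralfirst} to obtain an $(N',n,2,\Theta(d))$-right regular graph $H$ with $N'=\Theta(N)$ and, crucially, $N'\ge N$ (which is what the construction there delivers, since $q$ is taken to be the smallest admissible prime with $q(q^2-1)(p+1)/8\ge N$). The expansion bound $\Lambda_H(m)\ge \min\bigl\{m/(2\sqrt d),\sqrt{2mN'}/d\bigr\}$ implies in particular $\Lambda_H(m)\ge \min\bigl\{m/(2\sqrt d),\sqrt{2mN}/d\bigr\}$ since $N'\ge N$. Next, I would delete an arbitrary set of $N'-N$ vertices from the left side of $H$, obtaining a graph $H'$ with exactly $N$ left vertices that is still left-regular of degree $2$. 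The right side is unchanged, so for every $S\subseteq V_L(H')$ the neighborhood $\Gamma_{H'}(S)$ coincides with $\Gamma_H(S)$, whence $\Lambda_{H'}(m)\ge \Lambda_H(m)$.

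Finally, applying Lemma \ref{lem:right-regular} to the left-regular graph $H'$ (with $D=2$) produces an $(N,n',4,d^*)$-right regular graph $G$ with $n'\le 2n$ and $d^*=\lceil 2N/n\rceil$. Since the Ramanujan-graph construction underlying Theorem \ref{thm:spectralfirst} gives $n=2N'/(p+1)=\Theta(N/d)$, the new right-degree satisfies $d^*=\Theta(d)$. The lemma also guarantees $\Lambda_G(m)\ge \Lambda_{H'}(m)$, which is exactly the bound claimed. The argument is essentially plumbing: there is no genuine obstacle, only a small amount of parameter bookkeeping to verify that vertex deletion and right-regularization together keep the left-degree within the allowed bound $4$ and leave the right-degree within a constant factor of $d$. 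The only quantitative check that really matters is $n=\Theta(N/d)$, so that $d^*=\Theta(d)$, and this follows immediately from $N'=(p+1)n/2$ together with $N'=\Theta(N)$.
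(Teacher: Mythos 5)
Your proposal is correct and follows the paper's own (terse) proof exactly: invoke Theorem \ref{thm:spectralfirst} to get $N'\ge N$ with $N'=\Theta(N)$, delete $N'-N$ left vertices (which cannot decrease the expansion profile and leaves the graph left-$2$-regular), and then apply Lemma \ref{lem:right-regular}, with the one nontrivial check being $n=\Theta(N/d)$ so that the new right-degree is $\Theta(d)$. You even silently fix the paper's sign typo ("$N-N'$" should read "$N'-N$"), so nothing is missing.
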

\begin{proof}
  Apply Theorem \ref{thm:spectralfirst} to get a graph with $N' \geq
  N$, $N'\approx N$ vertices on the left, then remove an arbitrary subset
  of $N-N'$ vertices from the left hand side.  This doesn't affect the
  expansion properties, but it destroys right-regularity.  Apply
  Lemma~\ref{lem:right-regular} to correct this.
\end{proof}

\subsubsection{Sum-product expanders}

In this section, $p$ will denote a prime, and $\F_p$ the finite field
with $p$ elements.  The following result is implicit in \cite[\S 4]{BKSSW},
and is based on a key ``sum-product'' lemma (Lemma 3.1) from
\cite{BIW}, which is itself a statistical version of the
sum-product theorems of Bourgain, Katz, and Tao \cite{BKT03},
and Bourgain and Konyagin \cite{BK03} for finite fields.

\begin{prop}
\label{prop:BKSSW}
  There exists an absolute constant $\xi_0 > 0$ such that for all
  primes $p$ the following holds. Consider the bipartite graph $G_p =
  (\F_p^3, [4] \times \F_p, E)$ where a left vertex $(a,b,c) \in
  \F_p^3$ is adjacent to $(1,a)$, $(2,b)$, $(3,c)$, and $(4,a \cdot b
  + c)$ on the right. Then $\Lambda_{G_p}(m)\geq \min\left\{ p^{0.9},
  m^{1/3+\xi_0}\right\}$.
\end{prop}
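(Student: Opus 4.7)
For $S\subseteq\F_p^3$ with $|S|=m$, let $A=\pi_1(S)$, $B=\pi_2(S)$, $C=\pi_3(S)$ and $D=\{ab+c:(a,b,c)\in S\}$. The edge rule defining $G_p$ gives
\[
\Gamma_{G_p}(S)=(\{1\}\times A)\cup(\{2\}\times B)\cup(\{3\}\times C)\cup(\{4\}\times D),
\]
a disjoint union inside $[4]\times\F_p$, so the task is to prove $|A|+|B|+|C|+|D|\geq\min\{p^{0.9},m^{1/3+\xi_0}\}$.

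If any of $|A|,|B|,|C|,|D|$ is already at least $p^{0.9}$ we are done, so I assume all four are strictly less than $p^{0.9}$; in particular $A$, $B$, $C$ lie in the nontrivial sum-product window $|X|\leq p^{1-\delta}$ for an absolute $\delta>0$. The inclusion $S\subseteq A\times B\times C$ gives $|A|\cdot|B|\cdot|C|\geq m$, and AM-GM already yields the trivial bound $|A|+|B|+|C|\geq 3m^{1/3}$. The entire content of the proposition is to beat this exponent $1/3$ by an absolute amount, and this improvement must come from the sum-product phenomenon in $\F_p$. The intended input is the statistical sum-product lemma of Barak--Impagliazzo--Wigderson (Lemma 3.1 of \cite{BIW}), which is itself a consequence of the finite-field sum-product theorems of Bourgain--Katz--Tao and Bourgain--Konyagin \cite{BKT03,BK03}: for $A,B,C\subseteq\F_p$ with $|A|,|B|,|C|\leq p^{1-\delta}$, the image $\{ab+c:a\in A,\,b\in B,\,c\in C\}$ has cardinality at least $(|A|\cdot|B|\cdot|C|)^{1/3+\delta'}$ for some absolute $\delta'>0$.

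The main subtlety is that $D$ records only the values $ab+c$ actually realized by some triple in $S$, not by the full product $A\times B\times C$, so this lemma does not apply to $D$ directly. I would bridge the gap by a fibering argument: set $T=|D|$ and pick $d^*\in D$ whose fibre $S_{d^*}=\{(a,b,c)\in S:ab+c=d^*\}$ has cardinality at least $m/T$. On $S_{d^*}$ the third coordinate is forced to $c=d^*-ab$, so projecting to the first two coordinates injects $S_{d^*}$ into $\{(a,b)\in A\times B:ab\in d^*-C\}$; equivalently, at least $m/T$ pairs $(a,b)\in A\times B$ have $ab$ lying in a fixed translate of $C$. This is a dense multiplicative incidence between $A$, $B$, and a translate of $C$, and feeding it into the finite-field sum-product bound forces either $T\geq m^{1/3+\xi_0}$ or $\max\{|A|,|B|,|C|\}\geq m^{1/3+\xi_0}$; in either case $|\Gamma_{G_p}(S)|$ reaches the target. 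The principal obstacle I expect is the quantitative bookkeeping -- propagating the essentially qualitative constants of \cite{BIW,BKT03,BK03} through the fibering reduction so as to recover the specific exponents $1/3+\xi_0$ and the threshold $p^{0.9}$, which is precisely what makes this proposition ``implicit in'' rather than ``explicit in'' \cite{BKSSW}.
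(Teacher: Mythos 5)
The paper does not actually prove Proposition~\ref{prop:BKSSW}; it invokes it as ``implicit in [\S 4]'' of \cite{BKSSW}, with the underlying sum-product input being Lemma~3.1 of \cite{BIW}. So there is no in-paper proof to compare against, and your proposal is an attempted reconstruction rather than an alternative to a given argument. Your framing of the problem is right and matches the intended source: $\Gamma_{G_p}(S)$ decomposes as the disjoint union of the three coordinate projections $A,B,C$ and the image $D=\{ab+c:(a,b,c)\in S\}$; the $p^{0.9}$ threshold exactly screens off the regime where the sum-product phenomenon is unavailable; and $|A|\,|B|\,|C|\geq m$ reduces everything to beating the exponent $\tfrac13$ by an absolute amount.

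The gap is in the fibering step, and it is not ``quantitative bookkeeping'' as you suggest. After passing to the heaviest fibre $S_{d^*}$ you obtain at least $m/T$ pairs $(a,b)\in A\times B$ with $ab\in d^*-C$, and you then assert that ``feeding it into the finite-field sum-product bound forces either $T\geq m^{1/3+\xi_0}$ or $\max\{|A|,|B|,|C|\}\geq m^{1/3+\xi_0}$.'' No such incidence statement is available in the form you need, and the natural one is simply false: if $A=B$ is a geometric progression of length $n$ and $C'$ is an $n$-element initial segment of the geometric progression $A\cdot B$, then the number of pairs $(a,b)\in A\times B$ with $ab\in C'$ is $\binom{n+1}{2}\approx n^2/2$, while $|A|,|B|,|C'|$ are all $\approx n$. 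So ``many $ab$ landing in a set of comparable size'' does not, by itself, force any of the sets to grow --- multiplicative energy and product-set size are exactly what geometric progressions fail to expand, and all you recover from the pigeonhole is the trivial $T\gtrsim m/(|C|\min(|A|,|B|))$, which does not beat $m^{1/3}$. (In this example the proposition is still true, but for a different reason: $m\approx n^2$ while $|A|\,|B|\,|C|\approx n^3 \gg m$, so $\max\{|A|,|B|,|C|\}\geq(|A||B||C|)^{1/3}$ already exceeds $m^{1/3+\xi_0}$ --- a slack your argument never uses.)

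What the reduction in \cite{BKSSW} actually needs is a second-moment (Cauchy--Schwarz over \emph{all} fibres) argument rather than a single max-fibre pigeonhole: one bounds $|D|\geq m^2/\sum_{d} n_d^2$ where $n_d$ is the fibre size, and then controls the collision count $\sum_d n_d^2 \leq |\{(a,b,c,a',b',c')\in(A\times B\times C)^2 : ab+c=a'b'+c'\}|$ by the sum-product input. Equivalently, one works with the uniform distribution on $S$ and measures closeness to the product distribution $U_A\times U_B\times U_C$ (using the dichotomy $|A||B||C|\geq m^{1+3\xi_0}$ or not), after which the statistical Lemma~3.1 of \cite{BIW} can be applied to independent sources. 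Your single-fibre reduction throws away the averaging that makes this work, so as written the proof does not close.
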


Note that trivially $|\Gamma_{G_p}(S)| \ge |S|^{1/3}$, and the above
states that not-too-large sets $S$ expand by a sizeable amount more
than the trivial bound. Using the above construction, we can now prove
the following.

\begin{theorem}
  \label{thm:SP-regular} For all integers $N \geq 1$, there
  is an explicit construction of an $(N,n,8,\Theta(N^{2/3}))$-right
  regular graph $G$ which
  satisfies
  $$\Lambda_G(m) \geq \min\left\{\tfrac18 n^{0.9}, m^{1/3+\xi_0} \right\}.$$
(Here $\xi_0$ is
the absolute constant from Proposition~\ref{prop:BKSSW}.)
\end{theorem}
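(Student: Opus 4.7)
The plan is to start from Proposition \ref{prop:BKSSW} and massage it into the required form using Lemma \ref{lem:right-regular}. Given $N$, I would first pick a prime $p$ so that $p^3 \ge N$ and $p = \Theta(N^{1/3})$. Such a prime exists by Bertrand's postulate: we may take $p$ to be any prime in the interval $[\lceil N^{1/3}\rceil,\, 2\lceil N^{1/3}\rceil]$. This choice makes the sum-product graph $G_p$ of Proposition \ref{prop:BKSSW} have the right order of magnitude on both sides: $|V_L|=p^3 = \Theta(N)$ and $|V_R| = 4p = \Theta(N^{1/3})$, with left degree $4$ and right degree $p^2$.

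Next, I would trim the left side down to exactly $N$ vertices by removing an arbitrary set of $p^3 - N$ vertices from $V_L$. Let $G'$ denote the resulting graph. The left degree is still at most $4$, and removing left vertices can only improve the expansion profile, since every set $S$ of $V_L(G')$ is also a set in $V_L(G_p)$. Hence
\[
\Lambda_{G'}(m) \;\ge\; \Lambda_{G_p}(m) \;\ge\; \min\bigl\{p^{0.9},\, m^{1/3+\xi_0}\bigr\}
\]
for all $m \le N$. The graph $G'$ is left-regular (in the sense of degree at most $D=4$) but in general not right-regular. However, its average right-degree is $4N/(4p) = N/p = \Theta(N^{2/3})$, which is already what we want for the right-degree of the final graph.

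I would then feed $G'$ into Lemma \ref{lem:right-regular}. This produces an $(N, n, 2D, d)$-right regular graph $G$ with $d = \lceil 4N/(4p)\rceil = \lceil N/p\rceil = \Theta(N^{2/3})$, left degree at most $2D = 8$, and $n \le 2\cdot 4p = 8p$. Moreover the lemma guarantees $\Lambda_G(m) \ge \Lambda_{G'}(m)$. Combining with the bound above gives
\[
\Lambda_G(m) \;\ge\; \min\bigl\{p^{0.9},\, m^{1/3+\xi_0}\bigr\}.
\]
Finally I would convert $p^{0.9}$ into a bound in terms of $n$ by using $p \ge n/8$. Since $8^{0.9} < 8$, this yields $p^{0.9} \ge (n/8)^{0.9} \ge \tfrac18\, n^{0.9}$, giving $\Lambda_G(m) \ge \min\bigl\{\tfrac18 n^{0.9},\, m^{1/3+\xi_0}\bigr\}$, as required.

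There is no serious obstacle here; the argument is essentially a packaging of Proposition \ref{prop:BKSSW} and Lemma \ref{lem:right-regular}. The only point that needs a brief sanity check is the constant $\tfrac18$ in front of $n^{0.9}$, which comes out because Lemma \ref{lem:right-regular} may double the number of right vertices (from $4p$ to at most $8p$) and the exponent $0.9$ slightly softens the factor $8$ in the denominator.
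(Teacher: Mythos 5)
Your proof is correct and follows essentially the same route as the paper: choose a prime $p=\Theta(N^{1/3})$ with $p^3\ge N$, trim the left side of $G_p$ to $N$ vertices, and apply Lemma~\ref{lem:right-regular} to restore right-regularity. The only addition is that you spell out the final bookkeeping step converting $p^{0.9}$ to $\tfrac18 n^{0.9}$ via $n\le 8p$ and $8^{0.9}<8$, which the paper leaves implicit under ``with the same expansion property.''
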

\begin{proof}
  Let $p$ be the smallest prime such that $p^3 \ge N$; note that $N^{1/3}\leq p
  \le 2 N^{1/3}$. Construct the graph $G_p$, and a subgraph $H$ of
  $G_p$ by deleting an arbitrary $p^3 - N$ vertices on the left. Thus
  $H$ has $N$ vertices on left, $4p$ vertices on the right, is
  left-regular with degree $4$ and satisfies, by
  Proposition~\ref{prop:BKSSW}, $\Lambda_H(m) \geq \min\left\{ p^{0.9} ,
    m^{1/3+\xi_0} \right\}$. Applying the transformation
  of Lemma~\ref{lem:right-regular} to $H$, we get an $(N,n,8,d)$-right
  regular graph with $d = \lceil
  \frac{4N}{4p}\rceil \approx N^{2/3}$ and
  with the same expansion property.
\end{proof}

\subsection{Distortion and spreading}
\label{sec:spread}

For a vector $x\in \R^N$ and a subset $S\subseteq [N]$ of coordinates,
we denote by $x_S\in {\mathbb R}^{|S|}$ the projection of $x$ onto the
coordinates in $S$.  We abbreviate the complementary set of
coordinates $[N]\setminus S$ to $\bar S$.

\begin{defn}[Distortion of a subspace]
For a subspace $X \subseteq \R^N$, we define
$$
\Delta(X) = \sup_{x \in X \atop {x \neq 0}} \frac{\sqrt{N} \|x\|_2}{\|x\|_1}.
$$
\end{defn}

As we already noted in the introduction, instead of distortion it turns out
to be more convenient to work with the following notion.

\begin{defn} \label{def:spread}
A subspace $X\subseteq \R^N$ is {\em $(t,\epsilon)$-spread} if for every
$x\in X$ and every $S\subseteq [N]$ with $|S|\leq t$, we have
$$
\|x_{\bar S}\|_2 \geq\epsilon\cdot\|x\|_2.
$$
\end{defn}

Let us begin with relating these two notions.

\begin{lemma} \label{lem:spread_vs_distortion}
Suppose $X \subseteq \mathbb R^N$.
\begin{tenumerate}
\item \label{1} If $X$ is $(t,\epsilon)$-spread then
$$
\Delta(X) \leq \sqrt{\frac Nt}\cdot\epsilon^{-2};
$$

\item \label{2} conversely, $X$
is $\of{\frac N{2\Delta(X)^2},\ \frac 1{4\Delta(X)}}$-spread.
\end{tenumerate}
\end{lemma}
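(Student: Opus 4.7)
For part (a), the plan is to take any nonzero $x \in X$ and let $S$ denote the set of $t$ coordinates of $x$ with the largest absolute values. I would first extract two bounds on the tail $x_{\bar S}$: an $\ell_2$ lower bound coming directly from the spreading hypothesis, namely $\|x_{\bar S}\|_2 \geq \epsilon \|x\|_2$, and an $\ell_\infty$ upper bound coming from the choice of $S$. For the latter, every coordinate in $\bar S$ is dominated by the smallest coordinate in $S$, which in turn is bounded by the average $\ell_2$-mass on $S$, so $\|x_{\bar S}\|_\infty \leq \|x_S\|_2/\sqrt{t}\leq \|x\|_2/\sqrt{t}$.

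The key trick then is to convert these into an $\ell_1$ lower bound for the tail via the elementary inequality $\|y\|_2^2 \leq \|y\|_\infty \cdot \|y\|_1$. Applied to $y = x_{\bar S}$, this yields
\[
\|x\|_1 \;\geq\; \|x_{\bar S}\|_1 \;\geq\; \frac{\|x_{\bar S}\|_2^2}{\|x_{\bar S}\|_\infty} \;\geq\; \frac{\epsilon^2 \|x\|_2^2}{\|x\|_2/\sqrt{t}} \;=\; \epsilon^2\sqrt{t}\,\|x\|_2,
\]
and rearranging gives $\Delta(X) \leq \sqrt{N/t}\,\epsilon^{-2}$. This is the only nontrivial step; the rest is bookkeeping.

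For part (b), set $D \df \Delta(X)$ and $t \df N/(2D^2)$, fix $x \in X$ and $S \subseteq [N]$ with $|S| \leq t$. The definition of distortion gives the global bound $\|x\|_1 \geq \sqrt{N}\,\|x\|_2/D$. On the other hand, Cauchy--Schwarz controls what $S$ can eat: $\|x_S\|_1 \leq \sqrt{|S|}\,\|x_S\|_2 \leq \sqrt{N/(2D^2)}\,\|x\|_2 = \sqrt{N}\,\|x\|_2/(D\sqrt{2})$. Subtracting, $\|x_{\bar S}\|_1 \geq (1 - 1/\sqrt{2})\sqrt{N}\,\|x\|_2/D$. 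Finally, by Cauchy--Schwarz in the other direction on the $(N-|S|)$-dimensional vector $x_{\bar S}$,
\[
\|x_{\bar S}\|_2 \;\geq\; \frac{\|x_{\bar S}\|_1}{\sqrt{N-|S|}} \;\geq\; \frac{(1-1/\sqrt{2})\,\|x\|_2}{D} \;>\; \frac{\|x\|_2}{4D},
\]
where the last inequality uses $1 - 1/\sqrt{2} > 1/4$. This verifies the spread property with parameters $(N/(2D^2), 1/(4D))$.

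I expect no real obstacle here: part (b) is a direct one-line computation from the definition, and part (a) is a routine exercise once one recognizes that the right thing to do is to pick $S$ to be the top-$t$ coordinates (making $\ell_\infty$ of the tail small) and then interpolate between $\ell_2$ and $\ell_1$ via $\|y\|_2^2 \leq \|y\|_\infty\|y\|_1$. The interpolation inequality is the only step where a little thought is required; everything else is mechanical.
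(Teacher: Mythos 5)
Your proposal is correct and mirrors the paper's argument essentially step for step: part (a) uses the same choice of $S$ as the top-$t$ coordinates, the same $\ell_\infty$ bound $\|x_{\bar S}\|_\infty \leq \|x\|_2/\sqrt{t}$, and the same interpolation $\|y\|_1 \geq \|y\|_2^2/\|y\|_\infty$; part (b) is the same Cauchy--Schwarz sandwich, differing only in that you divide by $\sqrt{N-|S|}$ where the paper (slightly more loosely) divides by $\sqrt{N}$.
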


\begin{proof}
\ref{1}. Fix $x\in X$; we need to prove that
\begin{equation} \label{eq:distortion}
\|x\|_1\geq \sqrt t \epsilon^2\|x\|_2.
\end{equation}
W.l.o.g. assume that $\|x\|_2=1$ and that $|x_1|\geq |x_2|\geq\ldots
\geq |x_N|$. Applying Definition \ref{def:spread}, we know that
$\|x_{[t+1..N]}\|_2\geq\epsilon$. On the other hand, $\sum_{i=1}^t|x_i|^2\leq 1$,
therefore $|x_t|\leq \frac 1{\sqrt t}$ and thus $\|x_{[t+1..N]}\|_\infty\leq
\frac 1{\sqrt t}$. And now we get (\ref{eq:distortion}) by the calculation
$$
\|x\|_1\geq \|x_{[t+1..N]}\|_1 \geq \frac{\|x_{[t+1..N]}\|_2^2}{\|x_{[t+1..N]}\|_\infty}
\geq \sqrt t\epsilon^2.
$$

\medskip
\ref{2}. Let $t=\frac{N}{2\Delta(X)^2}$. Fix again $x\in X$ with $\|x\|_2=1$ and
$S\subseteq [N]$ with $|S|\leq t$. By the bound on distortion, $\|x\|_1\geq
\frac{\sqrt N}{\Delta(X)}$. On the other hand,
$$
\|x_S\|_1 \leq \sqrt t\cdot \|x_S\|_2\leq \sqrt t= \frac{\sqrt{N/2}}{\Delta(X)},
$$
hence $\|x_{\bar S}\|_1 = \|x\|_1-\|x_S\|_1\geq \frac{\sqrt N}{4\Delta(X)}$ and $\|x_{\bar S}\|_2\geq
\frac{\|x_{\bar S}\|_1}{\sqrt N}\geq \frac 1{4\Delta(X)}$.
\end{proof}

Next, we note spreading properties of random subspaces (they will be
needed only in the proof of Theorem \ref{thm:pseudorandom}).  The
following theorem is due to Kashin \cite{kasin}, with the optimal
bound essentially obtained by Garnaev and Gluskin
\cite{garnaev-gluskin}.  We note that such a theorem now follows from
standard tools in asymptotic convex geometry, given the entropy bounds
of Sch\"utt \cite{schutt84} (see, e.g.  Lemma B in \cite{LS07}).

\begin{theorem} \label{thm:random_distortion}
If $A$ is a uniformly random $k \times N$ sign matrix, then with
probability $1 - o(1)$,
$$\Delta(\ker(A)) \lesssim \sqrt{\frac{N}{k} \log \of{\frac{N}{k}}}.$$
\end{theorem}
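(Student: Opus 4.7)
The plan is to follow the classical Kashin / Garnaev--Gluskin proof in its modern packaging via Gaussian widths, carefully balancing scales so that the sharp $\log(N/k)$ factor appears (not $\log N$). By the definition of $\dist$, the target bound is equivalent to showing that with probability $1 - o(1)$, the ``spiky'' set $T \df \a B_1^N \cap S^{N-1}$ is disjoint from $\ker(A)$, where $\a \df c \sqrt{k/\log(N/k)}$ for some small absolute constant $c>0$. Indeed, any offending $x$ would witness $\sqrt{N}\|x\|_2/\|x\|_1 \geq \sqrt{N}/\a \asymp \sqrt{(N/k)\log(N/k)}$. Thus it suffices to lower-bound $\inf_{x \in T} \|Ax\|_2$.

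I would then invoke a Gordon-style small-ball inequality for random matrices with i.i.d.\ sub-Gaussian entries of variance $1$ (applicable to sign matrices via Mendelson's small-ball method, or via a Slepian--Sudakov--Gordon comparison with Gaussians): with probability $\ge 1 - e^{-\Omega(k)}$,
$$ \inf_{x \in T} \|Ax\|_2 \;\ge\; c_1 \sqrt{k} - c_2\, w(T), $$
where $w(T) \df \av_{g \sim N(0, I_N)} \sup_{x \in T} \langle g, x\rangle$ is the Gaussian width. Hence it remains to establish $w(T) \lesssim \sqrt{k}$.

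The width computation goes by splitting $g = g_S + g_{S^c}$, where $S$ indexes the top $s \df \lceil \a^2 \rceil$ coordinates of $|g|$. For $x \in T$,
$$ \langle g, x\rangle \;\le\; \|g_S\|_2 \|x\|_2 + \|g_{S^c}\|_\infty \|x\|_1 \;\le\; \|g_S\|_2 + \a \|g_{S^c}\|_\infty. $$
Standard Gaussian order-statistic estimates give $\av\|g_S\|_2 \lesssim \sqrt{s \log(N/s)}$ and $\av\|g_{S^c}\|_\infty \lesssim \sqrt{\log(N/s)}$, so $w(T) \lesssim \a \sqrt{\log(N/\a^2)}$, and the choice $\a^2 \asymp k/\log(N/k)$ delivers $w(T) \lesssim \sqrt{k}$ after tuning constants.

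The main obstacle is precisely the sharpness of this width estimate: the naive bound $w(\a B_1^N) \le \a \av\|g\|_\infty \lesssim \a \sqrt{\log N}$ only yields $\dist \lesssim \sqrt{(N/k)\log N}$, losing a factor $\sqrt{\log N/\log(N/k)}$ in the regime $k = \Theta(N)$ that is relevant to Theorem~\ref{thm:pseudorandom}. The improvement to $\log(N/k)$ comes from exploiting the $\|x\|_2 \le 1$ constraint via the decomposition above, and is quantitatively equivalent to Sch\"utt's entropy estimate for the embedding $\ell_1^N \hookrightarrow \ell_2^N$ --- the key technical input cited in the paragraph preceding the theorem.
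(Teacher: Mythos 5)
The paper gives no proof of Theorem~\ref{thm:random_distortion}; it cites Kashin and Garnaev--Gluskin and points to Sch\"utt's entropy bounds (via Lemma~B of \cite{LS07}) as the modern ingredient. Your outline is a valid alternative route through Gaussian widths rather than covering numbers, and the two are essentially dual: the sharp width estimate $w(\a B_1^N \cap S^{N-1}) \lesssim \a\sqrt{\log(N/\a^2)}$ and Sch\"utt's entropy estimate for $B_1^N$ in $\ell_2^N$ encode the same geometry, and your decomposition $g = g_S + g_{S^c}$ at scale $s \asymp \a^2$ is exactly the mechanism that upgrades $\sqrt{\log N}$ to $\sqrt{\log(N/\a^2)}$; the arithmetic $\a\sqrt{\log(N/\a^2)} \lesssim \sqrt{k}$ at $\a^2 \asymp k/\log(N/k)$ then checks out. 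One imprecision worth flagging: the phrase ``Slepian--Sudakov--Gordon comparison with Gaussians'' is not, by itself, a valid step for $\pm 1$ matrices --- Gordon's min-max comparison is a genuinely Gaussian fact, and Rademacher sums do not inherit it via contraction. What does work, and what you correctly name as the alternative, is Mendelson's small-ball method: for unit $x$ one has $\Pr\left[\lvert\langle a,x\rangle\rvert \geq \tfrac12\right] \geq \tfrac1{12}$ uniformly by Paley--Zygmund (using $\av\langle a,x\rangle^4 \leq 3$), so the small-ball parameter $Q_u(T)$ is bounded below by an absolute constant even on the spiky set $T$, and the multiplier-process width is controlled by $w(T)$ by standard chaining. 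So the proof goes through along the Mendelson line; just drop the Slepian/Gordon-comparison clause, which would not survive scrutiny for Bernoulli matrices. Also note that your success probability $1 - e^{-\Omega(k)}$ is $1 - o(1)$ only when $k \to \infty$, which is the intended regime here.
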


Combining Theorem \ref{thm:random_distortion} with Lemma
\ref{lem:spread_vs_distortion}(\ref{2}, we get:

\begin{theorem} \label{thm:random_spread}
If $A$ is a uniformly random $k \times N$ sign matrix, then with probability
$1 - o(1)$, $\ker(A)$ is a $\of{\Omega\of{\frac k{\log (N/k)}},
\Omega\of{\sqrt{\frac k{N\log(N/k)}}}}$-spread subspace.
\end{theorem}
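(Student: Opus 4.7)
The plan is essentially to chain together the two results stated immediately before the theorem, so there is no real obstacle; the whole content is a substitution.

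First I would invoke Theorem \ref{thm:random_distortion}, which gives that with probability $1 - o(1)$ a uniformly random $k\times N$ sign matrix $A$ satisfies
\[
\Delta(\ker(A)) \leq C\sqrt{\tfrac{N}{k}\log(N/k)}
\]
for some absolute constant $C$. Condition on this event for the rest of the argument; call the subspace $X = \ker(A)$ and set $D = \Delta(X)$.

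Next I would apply part \ref{2} of Lemma \ref{lem:spread_vs_distortion} as a black box: it tells us that $X$ is $\bigl(\tfrac{N}{2D^2},\,\tfrac{1}{4D}\bigr)$-spread. Substituting the distortion upper bound $D \lesssim \sqrt{(N/k)\log(N/k)}$ yields
\[
\frac{N}{2D^2} \;\gtrsim\; \frac{N}{(N/k)\log(N/k)} \;=\; \frac{k}{\log(N/k)},
\qquad
\frac{1}{4D} \;\gtrsim\; \sqrt{\frac{k}{N\log(N/k)}},
\]
which are exactly the parameters claimed in the theorem. Since the spreading property only gets stronger as $t$ decreases and $\epsilon$ decreases, and Definition \ref{def:spread} is monotone in $t$ and $\epsilon$, a lower bound on $N/(2D^2)$ and on $1/(4D)$ directly translates into the desired $(t,\epsilon)$-spread statement.

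The only thing to double-check is that the event from Theorem \ref{thm:random_distortion} has probability $1-o(1)$, which is given, and that Lemma \ref{lem:spread_vs_distortion} applies to any subspace (it does). There is no main obstacle: the proof is a one-line composition of the two preceding results.
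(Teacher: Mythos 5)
Your proof is correct and is exactly the paper's argument: the paper's proof of this theorem is the single line ``Combining Theorem~\ref{thm:random_distortion} with Lemma~\ref{lem:spread_vs_distortion}(\ref{2}), we get,'' which is precisely the composition you carry out. The only nit is that your monotonicity remark is phrased backwards: being $(t,\epsilon)$-spread becomes a \emph{weaker} (not stronger) claim about $X$ as $t$ and $\epsilon$ decrease, which is exactly why the lower bounds $\frac{N}{2D^2}\gtrsim \frac{k}{\log(N/k)}$ and $\frac{1}{4D}\gtrsim\sqrt{\frac{k}{N\log(N/k)}}$ suffice to yield the stated $\Omega(\cdot)$ parameters.
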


Finally, we introduce a ``relative'' version of Definition \ref{def:spread}. It
is somewhat less intuitive, but very convenient to
work with.

\begin{defn} \label{def:relative}
A subspace $X\subseteq \R^N$ is {\em $(t,T,\epsilon)$-spread} ($t\leq T$)
if for every $x\in X$,
$$
\min_{S\subseteq [N]\atop |S|\leq T}\|x_{\bar S}\|_2 \geq \epsilon\cdot
\min_{S\subseteq [N]\atop |S|\leq t}\|x_{\bar S}\|_2.
$$
\end{defn}

Note that $X$ is $(t,\epsilon)$-spread if and only if it is
$(0,t,\epsilon)$-spread, if and only if
it is $(1/2,t,\epsilon)$-spread. (Note that $t,T$
are not restricted to integers in our definitions.)
One obvious advantage of Definition \ref{def:relative} is that it
allows us to break the task of constructing well-spread subspaces into
pieces.

\begin{lemma} \label{lem:iterate}
Let $X_1,\ldots,X_r\subseteq \R^N$ be linear subspaces, and assume that $X_i$
is $(t_{i-1},t_{i},\epsilon_i)$-spread, where $t_0\leq t_1\leq\cdots\leq
t_r$. Then $\bigcap_{i=1}^r X_i$ is
$(t_0,t_r,\prod_{i=1}^r\epsilon_i)$-spread.
\end{lemma}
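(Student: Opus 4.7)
The proof is essentially a telescoping argument. Fix any $x \in \bigcap_{i=1}^r X_i$. Since $x \in X_i$ for each $i$, the hypothesis that $X_i$ is $(t_{i-1}, t_i, \epsilon_i)$-spread gives, directly from Definition~\ref{def:relative},
$$
\min_{S \subseteq [N], |S| \le t_i} \|x_{\bar S}\|_2 \;\ge\; \epsilon_i \cdot \min_{S \subseteq [N], |S| \le t_{i-1}} \|x_{\bar S}\|_2.
$$

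The plan is then to chain these $r$ inequalities together. Writing $\mu_i = \min_{|S| \le t_i} \|x_{\bar S}\|_2$, the bound above reads $\mu_i \ge \epsilon_i \, \mu_{i-1}$, so by induction on $i$ we obtain $\mu_r \ge \bigl(\prod_{i=1}^r \epsilon_i\bigr) \mu_0$. Since this holds for every $x \in \bigcap_i X_i$, the intersection is $(t_0, t_r, \prod_{i=1}^r \epsilon_i)$-spread, as claimed.

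There is really no obstacle here: the lemma is a formal consequence of the definition, and the only thing that makes the argument go through cleanly is precisely the feature highlighted just before the statement, namely that the relative spreading notion compares the ``best deletion'' quantity $\min_{|S|\le T}\|x_{\bar S}\|_2$ at two different thresholds, which composes multiplicatively. (If one had instead tried to chain the absolute notion of Definition~\ref{def:spread}, one would be stuck with inequalities comparing $\|x_{\bar S}\|_2$ to $\|x\|_2$ that do not telescope.) I would emphasize this point in a short remark after the proof, since it is exactly the reason the relative definition was introduced.
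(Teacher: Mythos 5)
Your proof is correct and is exactly the argument the paper has in mind: the paper's proof is literally just ``Obvious,'' and the telescoping of the quantities $\mu_i = \min_{|S|\le t_i}\|x_{\bar S}\|_2$ is the one-line reason. Your closing remark about why the relative definition composes multiplicatively while the absolute one does not is a fair observation, though the paper leaves it implicit.
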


\begin{proof}
Obvious.
\end{proof}

\section{An explicit weakly-spread subspace}
\label{sec:kerdock}
Now our goal can be stated as finding an explicit
construction that gets as close as possible to the probabilistic bound
of Theorem \ref{thm:random_spread}. In this section we perform a
(relatively simple) ``initialization'' step; the boosting argument
(which is the most essential contribution of our paper) is deferred to
Section \ref{sec:boosting}.  Below, for a matrix $A$, we denote by
$\|A\|$ its operator norm, defined as $\sup_{x \neq 0}
\frac{\|Ax\|_2}{\|x\|_2}$.

\begin{lemma}
\label{lem:spread-kernel} Let $A$ be any $k\times d$ matrix whose columns
$a_1,\ldots,a_d\in \R^k$ have $\ell_2$-norm 1, and, moreover, for any $1\leq
i<j\leq d$, $|\langle a_i,a_j\rangle|\leq\tau$. Then $\ker(A)$ is $\of{\frac
1{2\tau}, \frac 1{2\|A\|}}$-spread.
\end{lemma}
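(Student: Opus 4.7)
The plan is to exploit the kernel condition $Ax=0$, which gives $A_S x_S = -A_{\bar S} x_{\bar S}$ for any $S\subseteq [d]$, and then play a lower bound on the left-hand side (coming from near-orthogonality of the columns) against an upper bound on the right-hand side (coming from $\|A\|$). So I would fix an arbitrary $x\in\ker(A)$ and a set $S\subseteq[d]$ with $|S|\leq t:=\frac{1}{2\tau}$, and aim to show $\|x_S\|_2 \lesssim \|A\|\cdot \|x_{\bar S}\|_2$, which combined with $\|x\|_2^2 = \|x_S\|_2^2 + \|x_{\bar S}\|_2^2$ yields the claimed spreading factor.

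The upper bound side is immediate: $\|A_{\bar S} x_{\bar S}\|_2 \leq \|A_{\bar S}\|\cdot\|x_{\bar S}\|_2 \leq \|A\|\cdot\|x_{\bar S}\|_2$. The interesting side is the lower bound on $\|A_S x_S\|_2$. Expanding,
$$\|A_S x_S\|_2^2 = \sum_{i\in S} x_i^2 \|a_i\|_2^2 + \sum_{i\neq j\in S} x_i x_j \langle a_i,a_j\rangle \geq \|x_S\|_2^2 - \tau\|x_S\|_1^2.$$
By Cauchy--Schwarz, $\|x_S\|_1^2 \leq |S|\cdot\|x_S\|_2^2 \leq \frac{1}{2\tau}\|x_S\|_2^2$, so the off-diagonal loss is at most $\frac12\|x_S\|_2^2$ and $\|A_S x_S\|_2^2 \geq \frac12\|x_S\|_2^2$. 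This is exactly the point where the hypothesis $|S|\leq 1/(2\tau)$ is used to absorb the pairwise-correlation error into half the diagonal mass.

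Combining, $\|x_S\|_2 \leq \sqrt{2}\,\|A_S x_S\|_2 = \sqrt{2}\,\|A_{\bar S} x_{\bar S}\|_2 \leq \sqrt{2}\,\|A\|\cdot\|x_{\bar S}\|_2$. Since the columns of $A$ are unit vectors we have $\|A\|\geq 1$, so
$$\|x\|_2^2 = \|x_S\|_2^2 + \|x_{\bar S}\|_2^2 \leq (2\|A\|^2 + 1)\|x_{\bar S}\|_2^2 \leq 4\|A\|^2 \|x_{\bar S}\|_2^2,$$
giving $\|x_{\bar S}\|_2 \geq \frac{1}{2\|A\|}\|x\|_2$, as required.

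There is no real obstacle here; the only subtle point is ensuring that the off-diagonal estimate actually controls the full $\|A_S x_S\|_2$, which is why the Cauchy--Schwarz step converting $\|x_S\|_1$ to $\|x_S\|_2$ must match the threshold $|S|\leq 1/(2\tau)$ exactly. Everything else is a one-line application of the hypotheses and the triangle/operator-norm inequalities.
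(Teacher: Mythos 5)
Your proof is correct and follows essentially the same route as the paper: decompose $Ax=0$ into $A_S x_S = -A_{\bar S}x_{\bar S}$, lower-bound $\|A_S x_S\|_2$ by $\tfrac{1}{\sqrt 2}\|x_S\|_2$ using near-orthogonality and $|S|\le 1/(2\tau)$, and upper-bound $\|A_{\bar S}x_{\bar S}\|_2$ by $\|A\|\,\|x_{\bar S}\|_2$. The only cosmetic difference is in the lower-bound step, where you expand the quadratic form $\|A_S x_S\|_2^2$ directly and invoke Cauchy--Schwarz on $\|x_S\|_1$, while the paper phrases the same estimate spectrally via the Gram matrix $\Phi = A_S^{\mathsf T}A_S = I + \Phi'$ and a bound on the eigenvalues of $\Phi'$.
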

\begin{proof}
Fix $x \in \ker(A)$ and let $S \subseteq [d]$ be any subset with $t
= |S| \leq \frac{1}{2\tau}$.
Let $A_S$ be the $k \times t$ matrix which arises by restricting $A$
to the columns indexed by $S$, and let $\Phi = A_S^{\mathsf{T}}
A_S$.  Then $\Phi$ is the $t \times t$ matrix whose entries are
$\langle a_i, a_j\rangle$ for $i,j \in S$, therefore we can write
$\Phi = I + \Phi'$ where every entry of $\Phi'$ is bounded in
magnitude by $\tau$.  It follows that all the eigenvalues of $\Phi$ lie
in the range $\left[1-t\tau,1+t\tau\right]$. We conclude, in particular,
that $\|A_S\, y\|_2^2 \geq (1-t\tau) \|y\|_2^2 \geq \frac12 \|y\|_2^2$
for every $y \in \mathbb R^t$.

Let $A_{\bar S}$ be the restriction of $A$ to the columns in the
complement of $S$. Since $x \in \ker(A)$, we have
$$
0 = Ax = A_S x_S + A_{\bar S} x_{\bar S}
$$
so that $$\|A_{\bar S} x_{\bar S}\|_2 = \|A_S x_S\|_2 \geq
\frac{1}{\sqrt{2}} \|x_S\|_2.$$ Since
$\|A_{\bar S} x_{\bar S}\|_2 \leq \|A\| \cdot \|x_{\bar S}\|_2,$ it
follows that $\|x_S\|_2 \leq \sqrt{2} \|A\|\cdot \|x_{\bar S}\|_2$.
Since $\|A\|\geq 1$, this implies $\|x_{\bar S}\|_2 \geq \frac{\|x\|_2}{2\|A\|}$.
\end{proof}

We now obtain matrices with small operator norm and near-orthogonal
columns from explicit constructions of Kerdock codes.

\begin{prop}
\label{prop:kerdock}
For all positive integers $d,k$ where $k$ is a power of $4$ satisfying $k \le d \le k^2/2$,
there exists an explicit $k \times d$ matrix $A$ with the following properties.
\begin{enumerate}
\itemsep=0ex
\item Every entry of $A$ is either $\pm 1/\sqrt{k}$, and thus the
  columns $a_1,a_2,\dots,a_d \in \R^k$ of $A$ all have $\ell_2$-norm
  $1$,
\item For all $1 \le i < j \le d$, $|\langle a_i, a_j \rangle| \le 1/\sqrt{k}$, and
\item $\|A\| \le \sqrt{\left\lceil \frac{d}{k} \right\rceil}$.
\end{enumerate}
\end{prop}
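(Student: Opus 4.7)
The natural plan is to build $A$ as a concatenation of several $k\times k$ sign matrices, each suitably rescaled, that arise from the mutually unbiased bases structure underlying the Kerdock code $\mathcal{K}(m)$ with $k=4^m$. The relevant classical fact (see, e.g., \cite[Chap. 15]{Mac-Sloane}) is that $\mathcal{K}(m)$ partitions a set of $k^2/2$ vectors in $\{\pm 1\}^k$ into $k/2$ cosets of size $k$, with two key properties: within a single coset the $k$ codewords are pairwise orthogonal in $\R^k$, and between two different cosets every pair of codewords has inner product $\pm\sqrt{k}$. After rescaling by $1/\sqrt{k}$, each coset becomes an orthonormal basis of $\R^k$ consisting of $\pm 1/\sqrt{k}$ vectors, and inner products between vectors from different cosets become $\pm 1/\sqrt{k}$.

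Let $r = \lceil d/k \rceil$. The hypothesis $d \le k^2/2$ gives $r \le k/2$, so there are enough cosets available. I would take the first $r$ cosets in some fixed enumeration, form the $k\times k$ matrices $B_1,\dots,B_{r-1}$ by placing the $k$ vectors of each of the first $r-1$ cosets as columns, and form a $k \times (d-(r-1)k)$ matrix $B_r$ using an arbitrary subset of columns from the $r$-th coset. Set $A = [B_1 \mid B_2 \mid \cdots \mid B_r]$. Properties 1 and 2 are immediate: every entry equals $\pm 1/\sqrt{k}$ by construction, columns within a single block $B_i$ are orthonormal, and columns from different blocks have inner product of absolute value exactly $1/\sqrt{k}$.

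For property 3, I would compute $AA^\mathsf{T} = \sum_{i=1}^r B_i B_i^\mathsf{T}$. Since each $B_i$ has orthonormal columns, $B_i B_i^\mathsf{T}$ is the orthogonal projection onto the column span of $B_i$, so $\|B_i B_i^\mathsf{T}\| \le 1$. By the triangle inequality for the operator norm,
\[
\|AA^\mathsf{T}\| \;\le\; \sum_{i=1}^r \|B_i B_i^\mathsf{T}\| \;\le\; r \;=\; \left\lceil \tfrac{d}{k} \right\rceil,
\]
and hence $\|A\| = \sqrt{\|AA^\mathsf{T}\|} \le \sqrt{\lceil d/k\rceil}$, as required.

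The construction is fully explicit and polynomial-time computable because the Kerdock codewords can be generated directly from their defining quadratic forms over $\F_2$ (or equivalently from the classical $\Z_4$-linear presentation), so there is no obstacle on the algorithmic side. The only mildly delicate point is sourcing the mutually-unbiased-bases property of Kerdock codes precisely in the form stated above, which is why I would simply cite \cite[Chap. 15]{Mac-Sloane} rather than redevelop it.
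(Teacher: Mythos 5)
Your proposal is correct and takes essentially the same approach as the paper: both build $A$ by concatenating the (rescaled) mutually unbiased bases arising from the cosets of the Kerdock code, and both verify the three properties directly. The only cosmetic difference is in the operator-norm bound, where you use $\|A\|^2 = \|AA^{\mathsf{T}}\| = \|\sum_i B_iB_i^{\mathsf{T}}\| \le r$ via the projection structure and the triangle inequality, while the paper invokes Cauchy--Schwarz on $\|Ax\| \le \sum_i \|x_{(i)}\| \le \sqrt{r}\,\|x\|$; these are interchangeable one-line verifications of the same elementary fact.
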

\begin{proof}
  The proof is based on a construction of mutually unbiased bases over
  the reals using Kerdock codes~\cite{kerdock,cameron-seidel}.  First,
  let us recall that for $k$ a power of $2$, the Hadamard code of
  length $k$ is a subspace of $\F_2^k$ of size $k$ containing the $k$
  linear functions $L_a : \F_2^{\log_2 k} \rightarrow \F_2$, where for
  $a,x \in \F_2^{\log_2 k}$, $L_a(x) = a \cdot x$ (computed over
  $\F_2$).  A Kerdock code is the union of a Hadamard code $H
  \subseteq \F_2^k$ and a collection of its cosets $\{ f+H \mid f \in
  {\cal F}\}$, where ${\cal F}$ is a set of quadratic bent functions
  with the property that for all $f \neq g \in {\cal F}$, the function
  $f+g$ is also bent.\footnote{A function $f : \F_2^a \rightarrow
    \F_2$ for $a$ even is said to be {\em bent} if it is maximally far
    from all linear functions, or equivalently if all its Fourier
    coefficients have absolute value $1/2^{a/2}$.}%

  When $k$ is a power of $4$, it is known (see \cite{kerdock} and also
  \cite[Chap. 15, Sec. 5]{Mac-Sloane}) that one can construct an
  explicit set ${\cal F}$ of $(\frac{k}{2}-1)$ such bent functions. (A
  simpler construction of $(\sqrt{k}-1)$ such quadratic functions
  appears in \cite{cameron-seidel}.)  The cosets of these functions
  together with the Hadamard code (the trivial coset) give an explicit
  Kerdock code of length $k$ that has $k^2/2$ codewords. Interpreting
  binary vectors of length $k$ as unit vectors with $\pm 1/\sqrt{k}$
  entries, every coset of the Hadamard code gives an orthonormal basis
  of $\R^k$. The $k/2$ cosets comprising the Kerdock code thus yield $k/2$
  orthonormal bases $B_1,B_2,\dots,B_{k/2}$ of $\R^k$ with the
  property that for every pair $\{v,w\}$ of vectors in different bases,
  one has $|\langle v,w \rangle| = 1/\sqrt{k}$. (Such bases are called
  mutually unbiased bases.)

  For any $d$, $k \le d \le k^2/2$, write $d = qk+r$ where $0 \le r <
  k$. We construct our $k \times d$ matrix $A$ to consist of $[ B_1
  \dots B_q]$ followed by, in the case of $r > 0$, any $r$ columns of $B_{q+1}$. The first two
  properties of $A$ are immediate from the property of the bases
  $B_i$. To bound the operator norm, note that being an orthonormal
  basis, $\|B_i\| = 1$ for each $i$. A simple application of
  Cauchy-Schwartz then shows that $\|A\| \le \sqrt{\lceil d/k \rceil}$.
\end{proof}

Plugging in the matrices guaranteed by Proposition~\ref{prop:kerdock}
into Lemma~\ref{lem:spread-kernel}, we can conclude the following.
\begin{theorem}
\label{thm:local-subspace}
  For every integer $k$ that is a power of $4$ and every integer $d$ such
  that
  \begin{equation} \label{eq:restrictions}
  k \le d \le k^2/2,
  \end{equation}
  there exists a $\of{\frac{\sqrt{k}}{2} , \frac{1}{4} \sqrt{\frac kd}
  }$-spread subspace $L\subseteq \R^d$ with $\codim(L)\leq k$, specified as
  the kernel of an explicit $k \times d$ sign matrix.
\end{theorem}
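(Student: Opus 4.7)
The plan is to simply plug the Kerdock matrix from Proposition \ref{prop:kerdock} into Lemma \ref{lem:spread-kernel} and verify that the resulting spread parameters match the ones claimed. Since Proposition \ref{prop:kerdock} already supplies, for the given range of $k, d$, an explicit $k \times d$ matrix $A$ whose columns are unit vectors with $|\langle a_i, a_j\rangle| \le 1/\sqrt{k}$ and with $\|A\| \le \sqrt{\lceil d/k \rceil}$, there is essentially nothing to do beyond bookkeeping.

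Concretely, I would take $A$ from Proposition \ref{prop:kerdock} with $\tau = 1/\sqrt{k}$, and set $L = \ker(A)$. Then Lemma \ref{lem:spread-kernel} immediately tells us that $L$ is $\bigl(\tfrac{1}{2\tau}, \tfrac{1}{2\|A\|}\bigr)$-spread, which gives sparsity parameter $\tfrac{1}{2\tau} = \tfrac{\sqrt{k}}{2}$ and spreading constant at least $\tfrac{1}{2\sqrt{\lceil d/k\rceil}}$. To compare this with the target $\tfrac14 \sqrt{k/d}$, use the hypothesis $d \ge k$ to get $\lceil d/k \rceil \le d/k + 1 \le 2d/k$, so
\[
\frac{1}{2\sqrt{\lceil d/k\rceil}} \;\ge\; \frac{1}{2\sqrt{2d/k}} \;=\; \frac{1}{2\sqrt{2}}\sqrt{\tfrac{k}{d}} \;\ge\; \tfrac{1}{4}\sqrt{\tfrac{k}{d}},
\]
which is exactly the bound in the statement. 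The codimension bound $\codim(L) \le k$ is immediate from $L$ being the kernel of a matrix with $k$ rows.

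The only remaining point is that the theorem asks for $L$ to be the kernel of an explicit \emph{sign matrix}, whereas the matrix $A$ of Proposition \ref{prop:kerdock} has entries $\pm 1/\sqrt{k}$. This is handled by simply replacing $A$ with $\sqrt{k}\, A$, which has entries in $\{-1,+1\}$ and the same kernel. Explicitness of the resulting matrix is inherited directly from the explicit Kerdock construction in Proposition \ref{prop:kerdock}.

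There is no genuine obstacle here: the content of the theorem is packaged entirely inside Proposition \ref{prop:kerdock} (the Kerdock code construction of mutually unbiased bases) and Lemma \ref{lem:spread-kernel} (the spectral argument relating column incoherence and operator norm to spreading of the kernel). The only mild subtlety is making sure the constants line up—in particular, using $d \ge k$ to replace $\lceil d/k\rceil$ with $2d/k$ so that the $\sqrt{k/d}$ form of the spreading constant emerges cleanly with the stated factor of $1/4$.
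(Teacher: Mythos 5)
Your proposal is correct and is exactly the paper's own argument: the paper proves Theorem~\ref{thm:local-subspace} by the single sentence ``Plugging in the matrices guaranteed by Proposition~\ref{prop:kerdock} into Lemma~\ref{lem:spread-kernel}, we can conclude the following,'' and you have simply written out the bookkeeping (using $d \ge k$ to bound $\lceil d/k\rceil \le 2d/k$ and thus $\tfrac{1}{2\sqrt{\lceil d/k\rceil}} \ge \tfrac{1}{2\sqrt{2}}\sqrt{k/d} \ge \tfrac14\sqrt{k/d}$, plus rescaling $A \mapsto \sqrt{k}A$ to obtain a sign matrix with the same kernel) that the paper leaves implicit.
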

These subspaces will be used as ``inner'' subspaces in
an expander-based construction (Theorem~\ref{thm:guv_boosting}) to get
a subspace with even better spreading properties.

\section{Boosting spreading properties via
  expanders} \label{sec:boosting}

\subsection{The Tanner construction}

\begin{defn}[Subspaces from bipartite graphs]
  Given a bipartite graph $G = (\{1,2,\ldots,N\},V_R,E)$
  such that every vertex in $V_R$ has degree $d$, and a subspace $L
  \subseteq \R^d$, we define the subspace $X = X(G,L) \subseteq \R^N$ by
\begin{equation}
\label{eq:subspace-defn}
X(G,L) = \{x  \in \R^N \mid x_{\Gamma_G(j)} \in L \mbox{ for every $j \in V_R$} \} \ .
\end{equation}
\end{defn}
The following claim is straightforward.

\begin{claim} \label{11}
If $n = |V_R|$, then $\codim(X(G,L))\leq \codim(L)n$, that is
$\dim(X(G,L)) \geq N - (d-\dim(L))n$.
\end{claim}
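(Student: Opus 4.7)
The plan is to realize $X(G,L)$ explicitly as the kernel of a matrix with not too many rows, and then read the codimension bound off immediately.

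First, I would pick a parity-check matrix for $L$: since $\codim(L) = d - \dim(L)$, there exists a matrix $A \in \R^{\codim(L) \times d}$ with $L = \ker(A)$. Next, for each right vertex $j \in V_R$, the membership condition $x_{\Gamma_G(j)} \in L$ in the definition \eqref{eq:subspace-defn} is equivalent to the vector equation $A\, x_{\Gamma_G(j)} = 0$, which expands into $\codim(L)$ scalar linear equations on $x \in \R^N$. Each such equation is a linear functional on $\R^N$ supported on the $d$ coordinates in $\Gamma_G(j)$ (with coefficients given by the corresponding row of $A$), so it can be encoded as a single row of a global matrix $B \in \R^{(n\cdot\codim(L))\times N}$ obtained by concatenating, over all $j \in V_R$, the $\codim(L)$ constraints for vertex $j$.

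By construction, $X(G,L) = \ker(B)$. Since $B$ has at most $n \cdot \codim(L)$ rows, the rank-nullity theorem gives
\[
\codim(X(G,L)) = \mathrm{rank}(B) \leq n \cdot \codim(L),
\]
which rearranges to $\dim(X(G,L)) \geq N - n(d - \dim(L))$, as required.

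There is essentially no obstacle here: the argument is a pure counting of linear constraints, and the inequality (rather than equality) in the conclusion precisely reflects the possibility that the constraints coming from different right vertices may be linearly dependent (for instance, when vertex neighborhoods overlap). No expansion property of $G$ is needed for this bound; such properties will only become relevant later when bounding distortion or spreading of $X(G,L)$.
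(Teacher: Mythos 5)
Your proof is correct and matches the argument the paper implicitly has in mind (the paper just labels the claim "straightforward" and gives no proof): realize $X(G,L)$ as the kernel of the matrix $B$ obtained by stacking, for each $j\in V_R$, the $\codim(L)$ parity-check rows of $L$ padded out to length $N$ on the coordinates $\Gamma_G(j)$, then bound $\codim(X(G,L))=\mathrm{rank}(B)\le n\cdot\codim(L)$. This is exactly the combinatorial picture described in the paper's overview (Section \ref{overview}), so no further comment is needed.
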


\begin{remark}[Tanner's code construction]
  Our construction is a continuous analog of
  Tanner's construction of error-correcting
  codes~\cite{tanner}. Tanner constructed codes by
  identifying the vertices on one side of a bipartite graph with the
  bits of the code and identifying the other side with constraints.
  He analyzed the performance of such codes by examining the girth
  of the bipartite graph. Sipser and Spielman~\cite{SS96}
  showed that graph expansion plays a key role in the quality
  of such codes, and gave a linear time decoding algorithm to correct a
  constant fraction of errors.  In the coding world,
  the special case when $L$ is the
  $(d-1)$-dimensional subspace $\{y \in \R^d \mid \sum_{\ell = 1}^d
  y_{\ell} = 0 \}$ corresponds to the low-density parity check codes
  of Gallager~\cite{gallager-thesis}. In this
  case, the subspace is specified as the kernel of the bipartite
  adjacency matrix of $G$.
\end{remark}

\subsection{The spread-boosting theorem}

We now show how
to improve spreading
properties using the above construction.

\iffalse
\sasha{After the whole picture has cleared up a little bit, I am proposing to
remove the $s=\alpha/2$ improvement. Its advantages in Section 3.3.1 are
invisible, and in Section 3.3.2 it is working against us, and we have to take
some special and somewhat awkward measures (like ceiling, as Venkat suggests)
to prevent this. So, in my opinion it's not worth bothering but if you for
any reasons think otherwise, feel free to restore it in any reasonable form.}
\fi

\begin{theorem} \label{thm:pushdown}
Let $G$ be an $(N,n,D,d)$-graph with expansion profile $\Lambda_G(\cdot)$,
and let $L\subseteq \R^d$ be a $(t,\epsilon)$-spread subspace. Then for every
$T_0$, $0 < T_0 \leq N$, $X(G,L)$ is $\of{T_0, \frac{t}{D} \Lambda_G(T_0), \frac{\epsilon}{\sqrt{2D}}}$-spread.
%K$, $X(G,L)$ is $\of{ \frac{DT}{\alpha t},T,
%\frac{\epsilon}{\sqrt{2D}}}$-spread.
\end{theorem}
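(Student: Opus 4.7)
The plan is to fix $x \in X(G,L)$, an arbitrary $S \subseteq [N]$ with $|S| \leq T := \frac{t}{D}\Lambda_G(T_0)$, and show $\|x_{\bar S}\|_2 \geq \frac{\epsilon}{\sqrt{2D}}\,\mu_{T_0}(x)$, where $\mu_{T_0}(x) := \min_{|S^*| \leq T_0}\|x_{\bar{S^*}}\|_2$; by Definition \ref{def:relative}, this establishes the theorem. Call a right-vertex $j \in V_R$ \emph{bad} if $|\Gamma(j) \cap S| > t$, so that the $(t,\epsilon)$-spreading hypothesis on $L$ does not apply directly to $x_{\Gamma(j)}$; let $B$ be the set of bad right-vertices. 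The argument hinges on the auxiliary set $U := \{i \in [N] : \Gamma(i) \subseteq B\}$ of coordinates with no good right-neighbor. I will show (i) $|U| < T_0$, using the expansion of $G$, and (ii) $\|x_{\bar U}\|_2^2 \leq 2D\epsilon^{-2}\|x_{\bar S}\|_2^2$, using the spreading of $L$ at the good right-vertices. Combined with the bound $\|x_{\bar U}\|_2 \geq \mu_{T_0}(x)$ supplied by (i) and the definition of $\mu_{T_0}$, these yield the desired inequality.

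For (i), double-counting the pairs $(j,i) \in B \times S$ with $i \in \Gamma(j)$ and using right-regularity of $G$ gives
\[
t\,|B| \;<\; \sum_{j \in B}|\Gamma(j) \cap S| \;\leq\; \sum_{i \in S}|\Gamma(i)| \;\leq\; D|S| \;\leq\; DT \;=\; t\,\Lambda_G(T_0),
\]
so $|B| < \Lambda_G(T_0)$; since $\Gamma(U) \subseteq B$, the defining property of the expansion profile (contrapositively: $|\Gamma(U)| < \Lambda_G(T_0) \Rightarrow |U| < T_0$) forces $|U| < T_0$. For (ii), each good $j$ has $x_{\Gamma(j)} \in L$ and $|\Gamma(j) \cap S| \leq t$, so the $(t,\epsilon)$-spreading of $L$ yields $\|x_{\Gamma(j) \cap S}\|_2^2 \leq \epsilon^{-2}\|x_{\Gamma(j) \cap \bar S}\|_2^2$. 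Every $i \in S \setminus U$ appears in $\Gamma(j) \cap S$ for at least one good $j$, so
\[
\|x_{S \setminus U}\|_2^2 \;\leq\; \sum_{j \notin B}\|x_{\Gamma(j) \cap S}\|_2^2 \;\leq\; \epsilon^{-2}\sum_{j \in V_R}\|x_{\Gamma(j) \cap \bar S}\|_2^2 \;\leq\; D\epsilon^{-2}\|x_{\bar S}\|_2^2,
\]
where the final bound counts each $i \in \bar S$ with multiplicity equal to its left-degree (at most $D$). Decomposing $\bar U$ into its parts in $\bar S$ and in $S$, we conclude $\|x_{\bar U}\|_2^2 \leq \|x_{\bar S}\|_2^2 + \|x_{S \setminus U}\|_2^2 \leq (1 + D\epsilon^{-2})\|x_{\bar S}\|_2^2 \leq 2D\epsilon^{-2}\|x_{\bar S}\|_2^2$, using $D \geq 1$ and $\epsilon \leq 1$.

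The principal obstacle is isolating the auxiliary set $U$: it must be small enough to invoke the graph's expansion, yet must absorb exactly those coordinates in $S$ which lie beyond the reach of the spreading bound on individual good right-vertices. The definition $U = \{i : \Gamma(i) \subseteq B\}$ serves both purposes simultaneously, since $\Gamma(U) \subseteq B$ triggers the contrapositive of the expansion profile, while $i \notin U$ guarantees a good right-neighbor that witnesses $x_i$ in the sum over good constraints. Once $U$ is in place, the rest is a straightforward combination of a double count and a summation of the local spreading inequalities.
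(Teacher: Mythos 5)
Your proof is correct and takes essentially the same route as the paper: you define the set of bad right-vertices (the paper's $Q$), an auxiliary set of coordinates all of whose neighbors are bad, bound its size via the same double count and the expansion profile, and then sum the local $(t,\epsilon)$-spreading inequalities over good constraints with multiplicity at most $D$. The only cosmetic difference is that the paper restricts its auxiliary set to $S$ (so its analogue of your decomposition of $\|x_{\bar U}\|_2^2$ is an equality rather than an inequality), which changes nothing substantive.
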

\begin{proof}
Fix $x\in X(G,L)$ with $\|x\|_2=1$.
Fix also
$S\subseteq [N]$ with $|S|\leq T$, where $T = \frac{t}{D} \Lambda_G(T_0)$. We then need to prove that
\begin{equation} \label{eq:min_bound}
\|x_{\bar S}\|_2 \geq \frac{\epsilon}{\sqrt{2D}}
\min_{|B| \leq T_0} \|x_{\bar B}\|_2.
\end{equation}
Let
$$
Q=\left\{ j\in [n] : |\Gamma(j)\cap S|>t \right\},
$$
and
$$
B=\left\{i\in S : \Gamma(i)\subseteq Q\right\}.
$$
Then
$$
t|Q| <  E(S,\Gamma(S))\leq D|S|\leq DT,
$$
therefore
$$
|Q| < \frac{DT}{t} = \Lambda_G(T_0)  \ .
$$
On the other hand, we have
$|Q| \geq |\Gamma(B)|$, and hence $|\Gamma(B)| < \Lambda_G(T_0)$.
By the definition of the expansion profile, this implies that $|B| < T_0$,
and therefore (see \eqref{eq:min_bound}) we are only left to show that
\begin{equation}
\label{eq:required} \|x_{\bar S}\|_2 \geq \frac{\epsilon}{\sqrt{2D}}
\cdot\|x_{\bar B}\|_2
\end{equation}
for our particular $B$.

Note first that
\begin{equation} \label{eq:bound1}
\|x_{\bar B}\|_2^2 = \|x_{\bar S}\|_2^2+\|x_{S\setminus B}\|_2^2.
\end{equation}
Next, since every vertex in $S \setminus B$ has at least one neighbor in
$\Gamma(S)\setminus Q$, we have
\begin{equation}
\sum_{j\in\Gamma(S)\setminus Q}\|x_{\Gamma(j)}\|_2^2\geq \|x_{S\setminus
B}\|_2^2.
\end{equation}
Since $x\in X(G,L)$, $L$ is $(t,\epsilon)$-spread, and
$|\Gamma(j)\cap S|\leq t$ for any $j\in \Gamma(S)\setminus Q$,
\begin{equation}
\sum_{j\in\Gamma(S)\setminus Q}\|x_{\Gamma(j)\setminus S}\|_2^2 \geq
\epsilon^2\cdot \sum_{j\in\Gamma(S)\setminus Q}\|x_{\Gamma(j)}\|_2^2.
\end{equation}
Finally,
\begin{equation} \label{eq:bound4}
\sum_{j\in\Gamma(S)\setminus Q}\|x_{\Gamma(j)\setminus S}\|_2^2 \leq
\sum_{j\in [n]}\|x_{\Gamma(j)\setminus S}\|_2^2\leq D\cdot \|x_{\bar S}\|_2^2.
\end{equation}

\eqref{eq:bound1}-\eqref{eq:bound4} imply
$$
\|x_{\bar S}\|_2^2 \geq \frac{\epsilon^2}{D} (\|x_{\bar B}\|_2^2-\|x_{\bar
S}\|_2^2).
$$
Since $\epsilon\leq 1$ and $D\geq 1$, \eqref{eq:required} (and hence Theorem
\ref{thm:pushdown}) follows.
\end{proof}

\subsection{Putting things together}

In this section we assemble the proofs of Theorems
\ref{thm:construction} and \ref{thm:pseudorandom} from the already
available blocks (which are Theorems \ref{thm:SP-regular},
\ref{thm:spectral}, \ref{thm:random_spread}, \ref{thm:local-subspace}
and \ref{thm:pushdown}). Let us first see what we can do using
expanders from Theorem \ref{thm:SP-regular}.

\subsubsection{First step: Boosting with sum-product expanders}
\label{sec:boost}

The main difference between the explicit construction of Theorem
\ref{thm:local-subspace} and the probabilistic result (Theorem
\ref{thm:random_spread}) is the order of magnitude of $t$ (the parameter from
Definition \ref{def:spread}). As we will see in the next section, this
difference is very principal, and our first goal is to {\em somewhat} close
the gap with an {\em explicit} construction.

\begin{theorem} \label{thm:guv_boosting}
Fix an arbitrary constant $\beta_0<\min\left\{0.08,\frac 38\xi_0\right\}$, where
$\xi_0$ is the constant from Theorem \ref{thm:SP-regular}. Then
for all sufficiently large $N \in \N$ and $\eta\geq N^{-2\beta_0/3}$ there exists an
explicit subspace $X\subseteq \R^N$ with $\codim(X)\leq\eta N$ which is
$(N^{\frac12+\beta_0},\eta^{O(1)})$-spread.
\end{theorem}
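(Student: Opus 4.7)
The plan is to let $X = X(G, L)$ where $G$ is the sum-product expander from Theorem \ref{thm:SP-regular} and $L$ is a Kerdock-based inner subspace from Theorem \ref{thm:local-subspace}, and then to iterate Theorem \ref{thm:pushdown} a constant number of times in order to boost the spread from the initial scale $t_L \approx \sqrt{k}$ up past $N^{1/2+\beta_0}$. Concretely, I would take $G$ to be the $(N, n, 8, d)$-right regular graph of Theorem \ref{thm:SP-regular}, so $n = \Theta(N^{1/3})$, $d = \Theta(N^{2/3})$, and $\Lambda_G(m) \geq m^{1/3+\xi_0}$ whenever $m^{1/3+\xi_0} \leq n^{0.9}/8$. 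Pick $k$ to be the largest power of $4$ with $kn \leq \eta N$, so $k = \Theta(\eta N^{2/3})$; the hypotheses $\eta \geq N^{-2\beta_0/3}$ and $\beta_0 < 0.08$ easily force $k \leq d \leq k^2/2$ for large $N$, and Theorem \ref{thm:local-subspace} then supplies an explicit $L \subseteq \R^d$ with $\codim(L) \leq k$ that is $(t_L, \epsilon_L)$-spread for $t_L = \sqrt{k}/2$ and $\epsilon_L = \tfrac14 \sqrt{k/d} = \Theta(\sqrt{\eta})$. Setting $X := X(G, L)$, Claim \ref{11} gives $\codim(X) \leq kn \leq \eta N$ as required.

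With $D = 8$ and $\epsilon := \epsilon_L/\sqrt{2D} = \Theta(\sqrt{\eta})$, Theorem \ref{thm:pushdown} asserts that for every $T_0 > 0$, the subspace $X$ is $(T_0,\ (t_L/D)\,\Lambda_G(T_0),\ \epsilon)$-spread. I would iterate by setting $T^{(0)} := 1/2$ and $T^{(i+1)} := (t_L/D)\,\Lambda_G(T^{(i)})$; since $X$ is $(T^{(i)}, T^{(i+1)}, \epsilon)$-spread for every $i$, Lemma \ref{lem:iterate} applied with $X_1 = \cdots = X_r = X$ yields that $X$ is $(1/2,\ T^{(r)},\ \epsilon^r)$-spread, equivalently $(T^{(r)},\ \epsilon^r)$-spread absolutely. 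Provided $r$ is a constant depending only on $\xi_0$ and $\beta_0$, the spread factor $\epsilon^r$ is $\eta^{O(1)}$, so the whole task reduces to showing $T^{(r)} \geq N^{1/2+\beta_0}$.

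For the iteration analysis, the orbit starts at $T^{(0)} = 1/2$ and grows monotonically toward the fixed point $T^*$ of $m \mapsto (t_L/D)\, m^{1/3+\xi_0}$, which one computes to be $T^* = (t_L/D)^{3/(2-3\xi_0)}$; the saturation threshold of $\Lambda_G$ (reached only when $m \gtrsim N^{0.9/(1+3\xi_0)}$) is comfortably above $T^*$, so the polynomial bound $\Lambda_G(m) \geq m^{1/3+\xi_0}$ is valid along the entire orbit. Plugging $t_L/D = \Theta(\sqrt{\eta}\,N^{1/3})$ and the worst-case $\eta = N^{-2\beta_0/3}$ into $T^*$ yields the exponent $\log_N T^* \geq (1-\beta_0)/(2-3\xi_0) - o(1) = 1/2 + 3\xi_0/4 - \beta_0/2 + O(\xi_0\beta_0)$, and the hypothesis $\beta_0 < (3/8)\xi_0$ delivers the slack $3\xi_0/4 - \beta_0/2 > \beta_0$. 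Choosing $r$ large enough (a constant depending on $\xi_0, \beta_0$) that $\alpha^r$ (with $\alpha = 1/3+\xi_0 < 1$) is dominated by this slack then makes $T^{(r)}$ exceed $N^{1/2+\beta_0}$.

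The main obstacle is precisely this exponent accounting: the $\eta$-penalty of $\approx N^{-\beta_0/2}$ in $T^*$ must be more than compensated by the $N^{3\xi_0/4}$ expansion gain, and the condition $\beta_0 < (3/8)\xi_0$ is exactly tuned for this to hold after passing through the $1/(2-3\xi_0)$ factor; the mild bound $\beta_0 < 0.08$ in turn keeps first-order Taylor expansions valid, ensures $d \leq k^2/2$, and keeps the orbit safely below the saturation threshold. Everything else is bookkeeping --- picking $k$ as a power of $4$, passing between relative and absolute spread, and verifying that a constant $r$ iterations suffice.
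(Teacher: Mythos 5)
Your proposal follows essentially the same route as the paper: same outer graph $G$ (Theorem~\ref{thm:SP-regular}), same inner space $L$ (Theorem~\ref{thm:local-subspace} with $k \approx \eta d$), same $X = X(G,L)$, and the same engine — a constant number of applications of Theorem~\ref{thm:pushdown} glued by Lemma~\ref{lem:iterate}. Where you differ is only in how the iteration is analyzed. The paper first converts the two-piece bound $\Lambda_G(m) \geq \min\{\frac18 n^{0.9}, m^{1/3+\xi_0}\}$ into a single uniform bound $\Lambda_G(m) \geq m\,d^{\beta_0-1/2}$ valid for all $m \leq N^{1/2+\beta_0}$ (this is exactly where both numerical constraints $\beta_0 < \frac38\xi_0$ and $\beta_0 < 0.08$ arise), and then iterates with the constant multiplicative gain $(t/D)\cdot d^{\beta_0-1/2} = \Omega(d^{\beta_0/2})$, taking $r \approx 1/\beta_0$ steps. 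You instead iterate with the raw bound $\Lambda_G(m) \geq m^{1/3+\xi_0}$ and argue convergence to the fixed point $T^* = (t_L/D)^{3/(2-3\xi_0)}$ of $T \mapsto (t_L/D)T^{1/3+\xi_0}$.

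Both are correct, but the paper's linearization is cleaner precisely at the point you gloss over: you assert the saturation threshold (where $m^{1/3+\xi_0}$ overtakes $\frac18 n^{0.9}$) is ``comfortably above $T^*$'' without verifying it. That verification is exactly the content of the $\beta_0 < 0.08$ condition, and it deserves more than a parenthetical — you should either check that $N^{1/2+\beta_0}$ lies below the saturation threshold $N^{0.9/(1+3\xi_0)}$ along the relevant portion of the orbit, or (simpler, and what the paper does) note that once $\Lambda_G$ saturates at $\frac18 n^{0.9}$, the single-step image $(t_L/D)\cdot\frac18 n^{0.9}$ already exceeds $N^{1/2+\beta_0}$ when $\beta_0 < 0.08$, so saturation only helps. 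Also, picking $k$ as the largest power of $4$ with $kn \leq \eta N$ risks $k > d$ for $\eta$ near $1$; the paper sidesteps this by requiring $k \leq \eta d/8$. These are bookkeeping fixes; the substantive plan is sound and matches the paper.
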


\begin{proof}
In everything that follows, we assume that $N$ is sufficiently large.
%(the case
%of small $N$ can be manually handled by brute-force search).
The desired $X$ will be of the form $X(G,L)$, where $G$ is supplied by
Theorem \ref{thm:SP-regular}, and $L$ by Theorem
\ref{thm:local-subspace}. More specifically,
let $G$ be the explicit $(N,n,8,d)$-right regular graph from Theorem \ref{thm:SP-regular}
with $d \approx N^{2/3}$ (and hence $n\approx N^{1/3}$).  Using Theorem \ref{thm:SP-regular},
one can check that for $m \leq N^{\frac12 + \beta_0}$,
we have
\begin{equation}\label{eq:profile}
\Lambda_G(m) \geq m d^{\beta_0 - \frac12}\ .
\end{equation}
Indeed, since $n\approx N^{1/3}$ and $d\approx N^{2/3}$, the inequality
$\frac 18n^{0.9}\geq md^{\beta_0-\frac 12}$ follows (for large $N$) from $\beta_0<0.08$, and the
inequality $m^{\frac 13+\xi_0}\geq md^{\beta_0-\frac 12}$ follows from
$\beta_0<\frac 38\xi_0$.

\iffalse (Choose $\beta_0>
0$ small enough to satisfy $$\frac{n^{0.9}}{8} N^{-\frac12-\beta_0}
\geq N^{(\frac12+\beta_0)(\xi_0-\frac23)} \geq N^{\frac12+\beta_0}
(\tfrac12 N^{2/3})^{\beta_0-\frac12},$$ and recall that $n \approx
N^{1/3}$, while $d \geq \frac12 N^{2/3}$.)  \fi

By our assumption $\eta\geq N^{-2\beta_0/3}\geq N^{-0.1}$, along with $d \approx
N^{2/3}$, we observe that $d\leq o\of{\eta d}^2$.  Hence
(cf. the statement of Theorem \ref{thm:local-subspace}), we can find
$k\leq \frac{\eta d}8$, $k\approx \eta d$ that is a power of 4 and
also satisfies the restrictions \eqref{eq:restrictions}. Let $L$ be an
explicit $\of{\Omega\of{\sqrt{\eta d}},
  \Omega\of{\sqrt{\eta}}}$-spread subspace guaranteed by Theorem
\ref{thm:local-subspace}.

The bound on codimension of $X(G,L)$ is obvious:
$\codim(X(G,L))\leq kn\leq \frac{\eta d
n}{8}\leq \eta N$.

For analyzing spreading properties of $X(G,L)$,
we observe that $\eta\geq N^{-2\beta_0/3}$
implies $\eta d \gtrsim d^{1-\beta_0}$,
hence $L$ is $(\Omega(d^{\frac12-\frac{\beta_0}{2}}), \eta^{O(1)})$-spread.
By Theorem \ref{thm:pushdown} and \eqref{eq:profile},
for every $T \leq N^{\frac12+\beta_0}$, we know that
$X(G,L)$ is $(T, \Omega(d^{\frac{\beta_0}{2}}) T, \eta^{O(1)})$-spread
In particular, for such $T$, $X(G,L)$ is $(T, N^{\Omega(1)} T, \eta^{O(1)})$-spread.

Applying Lemma
\ref{lem:iterate} with the same spaces $X_1:=\cdots:=X_r:=X(G,L)$ and
suitably large constant $r \approx 1/\beta_0 = O(1)$, we conclude that $X(G,L)$ is $\of{\frac12, N^{\frac12 + \beta_0},
\eta^{O(1)}}$-spread, completing the proof.
\end{proof}

\remove{
\begin{remark}
By letting  $c\rightarrow 0$ in Theorem 2.6, we could further push the value
of the parameter $t$ from $N^{2/3}$ to $N^{1-o(1)}$, and this alone would
give explicit subspaces $X$ with $\Delta(X)\leq N^{o(1)}$. But in the next
section we will be able to achieve much better results using spectral
expanders from Theorem \ref{thm:spectral}, so we do not elaborate on the
details.
\end{remark}
}

\subsubsection{Second step: Handling large sets based on spectral expanders}

The sum-product expanders of Theorem \ref{thm:SP-regular} behave
poorly for very large sets (i.e., as $m \to N$, the lower bound on
$\Lambda_G(m)$ becomes constant from some point).  The spectral
expanders of Theorem \ref{thm:spectral} behave poorly for small sets,
but their expansion still improves as $m \to N$.  In this section, we
finish the proofs of Theorems \ref{thm:construction} and
\ref{thm:pseudorandom} by exploring strong sides of both
constructions. We begin with Theorem \ref{thm:pseudorandom} as it is
conceptually simpler (we need only spectral expanders, do not rely on
Theorem \ref{thm:guv_boosting}, and still use only one fixed space
$X(G,L)$).

\bigskip
\noindent
{\bf Proof of Theorem \ref{thm:pseudorandom}.} By Theorem
\ref{thm:spectral} there exists an explicit $(N,n,4,d)$-right regular graph $G$
with
\begin{equation}\label{eq:another_bound_on_d}
  N^{\Omega\of{\frac 1{\log\log N}}} \leq d \leq N^{\frac 1{2\log\log N}}
\end{equation}
which has
$\Lambda_G(m) \geq \min\left\{ \frac m{2\sqrt
d},\ \frac {\sqrt{2Nm}}{d}\right\}$. Let $k=\lfloor
\frac{\eta}4d\rfloor$; our desired (probabilistic) space is then
$X(G,\ker(A))$, where $A$ is a uniformly random $k\times d$ sign matrix (due
to the upper bound in \eqref{eq:another_bound_on_d}, this uses at most
$d^2\leq N^{\frac 1{\log\log N}}$ random bits).
Recalling that $\eta>0$ is an absolute constant, by Theorem
\ref{thm:random_spread} $\ker(A)$ is an $(\Omega(d),\Omega(1))$-spread
subspace almost surely.

The bound on codimension is again simple: $\codim(X(G,\ker(A)))\leq
kn\leq\eta N$.

For analyzing spreading properties of $X$, let $m_0=8N/d$ (which is
the ``critical'' point where $\frac{m_0}{2\sqrt{d}} =\frac{\sqrt{2Nm_0}}{d}$.)
Then Theorem \ref{thm:pushdown}
says that $X(G,L)$ is
\begin{enumerate}
\itemsep=0ex
\item[{a.}] $\of{T,\Omega(\sqrt{d}) T,\Omega(1)}$-spread subspace for $T\leq m_0$, and
\item[{b.}] $\of{T, \Omega(\sqrt{NT}), \Omega(1)}$-spread
subspace for $m_0\leq T\leq N$.
\end{enumerate}

And now we are once more applying Lemma \ref{lem:iterate} with
$X_1:=X_2:=\ldots:=X_r:=X(G,L)$.
In $O(\log_d m_0) = O(\log \log N)$ applications of (a) with $T \leq m_0$, we conclude that
$X(G,L)$ is $(\frac12, m_0, (\log N)^{-O(1)})$-spread.
In $O(\log \log N)$ additional applications of (b) with $T \geq m_0$, we conclude that
$X(G,L)$ is $(\frac12, \Omega(N), (\log N)^{-O(1)})$-spread.

Since
$X(G,L)$ is an $(\Omega(N),(\log N)^{-O(1)})$-spread subspace,
the statement of Theorem \ref{thm:pseudorandom} immediately follows from
Lemma \ref{lem:spread_vs_distortion}(\ref{1}.
\remove{
Let $\epsilon >0$ be a sufficiently small constant; then by setting first
%$t_r
%$t_i = (\varepsilon \sqrt{d})^i$
\bigskip
\begin{equation}\label{eq:recursion}
  t_i=N\cdot \epsilon^{\frac 12\of{\frac 32}^i+\frac 12},
\end{equation}
(as in Lemma \ref{lem:iterate}) we can descend from $\epsilon N$ to $m_0$ within $O(\log\log N)$ steps; this
proves that $X(G,L)$ is an $(m_0,\epsilon N,(\log N)^{-O(1)})$-spread
subspace. And by letting $t_i=m_0\cdot (\epsilon\sqrt d)^i$ we descend from
$m_0$ to 1/2, also (due to the lower bound on $d$ in
\eqref{eq:another_bound_on_d}) within $O(\log\log N)$ steps. That is,
$X(G,L)$ is also a $(m_0,(\log N)^{-O(1)})$-spread subspace. Combining these
two facts, $X(G,L)$ is an $(\epsilon N,(\log N)^{-O(1)})$-spread subspace, and
now the statement of Theorem \ref{thm:pseudorandom} immediately follows from
Lemma \ref{lem:spread_vs_distortion}(\ref{1}.
}

\bigskip
\noindent
{\bf Proof of Theorem \ref{thm:construction}.} \label{final}
This is our most sophisticated
construction: we use a series of $X(G,L)$ for {\em different} graphs $G$, and
the ``inner'' spaces $L$ will come from Theorem
\ref{thm:guv_boosting}.
In what follows, we assume that $N$ is sufficiently large
(obviously for $N = O(1)$, every non-trivial subspace has bounded distortion).

To get started, let us denote
$$
\widetilde\eta =\frac{\eta}{(\log \log N)^2},
$$
and let us first construct and analyze subspaces $X(G,L)$ needed for our
purposes individually. For that purpose, fix (for the time being) any value
of $m$ with
\begin{equation}\label{eq:bound_on_k}
  1\leq m\leq\delta\widetilde\eta^{2\beta_0/3} N,
\end{equation}
$\delta$ a sufficiently small constant and $\beta_0$
is the constant from Theorem \ref{thm:guv_boosting}.

Applying Theorem \ref{thm:spectral}
(with $d:=N/m$), we get, for some $d=\Theta(N/m)$, an explicit
$(N,n,4,d)$-right regular graph $G_m$ with $\Lambda_{G_m}(m) \geq \Omega(d^{-1/2}) m$.
Note that \eqref{eq:bound_on_k} implies
$\widetilde \eta\geq d^{-2\beta_0/3}$ (provided the constant $\delta$ is small
enough), and thus all conditions of Theorem \ref{thm:guv_boosting} with
$N:=d,\ \eta:=\widetilde\eta$ are met. Applying that theorem, let
$L_m\subseteq \R^d$ be an explicit subspace with $\codim(L_m)\leq
\widetilde\eta d$ that is a $(d^{\frac12 + \beta_0},(\eta/\log \log N)^{O(1)})$-spread subspace. Consider
the space $X(G_m,L_m)\subseteq \R^N$.

Since $D=4$ is a constant, we have
$$
\codim(X(G_m,L_m)) \lesssim\widetilde\eta N=\frac{\eta N}{(\log \log N)^2}.
$$
And Theorem \ref{thm:pushdown} (applied to $T:=m$) implies
(recalling $\Lambda_{G_m}(m) \gtrsim d^{-1/2} m$, $t=d^{\frac12 + \beta_0}$, $d=\Theta(N/m)$) that
$X(G_m,L_m)$ is a $\of{m,\ \Omega\left(\left(\frac{N}{m}\right)^{\beta_0}\right) m,\ (\eta/\log
 \log N)^{O(1)}}$-spread subspace. We note that it is here that we crucially
use the fact that $L_m$ has spreading properties for $t \gg d^{1/2}$ ($t$
is the parameter from Definition~\ref{def:spread}) so that we more
than compensate for the factor $\sqrt{d}$ loss in
Theorem \ref{thm:pseudorandom} caused by the relatively poor expansion rate
of spectral expanders.

We will again apply Lemma \ref{lem:iterate}, but the spaces $X_i$ will
now be distinct.  In particular, for $i \in \N$ define $X_i =
X(G_{t_i},L_{t_i})$, where
$$
t_i = N \cdot \left(\frac{\eps}{N}\right)^{(1-\beta_0)^i}\ ,
%t_i = \frac{(\varepsilon d^{\beta_0})^i}{2},
$$
for some sufficiently small constant $\eps$, $0 < \eps < 1$.
It is easy to see that for some $r = O(\log \log N)$, we have
$t_r \leq \delta \tilde \eta^{2\beta_0/3} N$ and
$t_r \gtrsim \left(\delta \tilde \eta^{2\beta_0/3}\right)^2 N$.

\remove{
Now we apply essentially the same recursion as the recursion
\eqref{eq:recursion} from the proof of Theorem \ref{thm:pseudorandom}:
$$
t_i=N\cdot \epsilon^{\frac 12\of{1-\beta_0}^i+\frac 12},
$$
where (cf. \eqref{eq:bound_on_k}) $\epsilon=\delta\widetilde\eta^{2\beta_0/3}$, but
this time we go all the way down from $t_0=\epsilon N$ to $t_r=1/2$---this
still takes $O(\log\log N)$ steps. And, as we already mentioned above,
the principal difference from previous similar arguments is that the spaces
$X_i$ in Lemma \ref{lem:iterate} now become different. Namely, we let
$X_i=X(G_{t_i},L_{t_i})$.
}

Then for $X=\bigcap_{i=0}^{r-1} X_i$ we have $\codim(X)\lesssim r\frac{\eta
N}{(\log \log N)^2}\lesssim \frac{\eta N}{\log \log N}$. In particular,
$\codim(X)\leq \eta N$ for sufficiently large $N$.

By the above argument based on Theorem~\ref{thm:pushdown} and the
choice of the $t_i$'s, it is easily seen that $X_i$ is a
$(t_i,t_{i+1},(\eta/(\log \log N))^{O(1)})$-spread subspace.  By Lemma
\ref{lem:iterate}, $X$ is a $(\eps,t_r,(\eta/(\log \log
N))^{O(\log\log N)})$-spread subspace, or equivalently a
$(t_r,(\eta/(\log \log N))^{O(\log\log N)})$-spread subspace. Since we
also have $t_r \geq (\eta/(\log \log N))^{O(1)} N$, the required bound
on $\Delta(X)$ follows from Lemma
\ref{lem:spread_vs_distortion}(\ref{1}.

\section{Discussion}
\label{sec:discussion}

%\subsection{Improved expander constructions}

We have presented explicit subspaces $X \subseteq \mathbb R^N$
of dimension $(1-\eta)N$
with distortion $(\eta^{-1} \log \log N)^{O(\log \log N)}$ and,
using $N^{o(1)}$ random bits, distortion $\eta^{-O(\log \log N)}$.
It is natural to wonder whether better explicit constructions
of expanders can give rise to better bounds.  We make some
remarks about this possibility.

\begin{enumerate}
%\bigskip
%\noindent
\item {\bf The GUV and CRVW expander families.}
The next two theorems essentially follow from \cite{GUV07} and
\cite{CRVW02}, respectively (after an appropriate
application of Lemma \ref{lem:right-regular}).

\begin{theorem}[\cite{GUV07}]
\label{thm:guv-regular} For each fixed $0 < c,\varepsilon \le 1$, and for all
integers $N,K$ with $K \le N$, there is an explicit construction of an
$(N,n,D,d)$-right regular graph $G$ with $D \lesssim ((\log N)/\varepsilon)^{2+2/c}$ and $d \ge
N/(D K^{1+c})$ and such that $\Lambda_G(m) \geq (1-\varepsilon) D \cdot \min\{K,m\}$.
\end{theorem}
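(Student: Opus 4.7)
The plan is to invoke the lossless expander construction of Guruswami, Umans, and Vadhan \cite{GUV07} as a black box and then apply Lemma~\ref{lem:right-regular} to obtain right-regularity. Specifically, for parameters $K \le N$, $\varepsilon, c > 0$, that paper produces an explicit bipartite graph $H = (V_L, V_R', E_H)$ with $|V_L| = N$ which is left-regular of degree $D \lesssim ((\log N)/\varepsilon)^{2+2/c}$ and has $|V_R'| \lesssim D \cdot K^{1+c}$, with the lossless expansion guarantee that every $S \subseteq V_L$ with $|S| \le K$ satisfies $|\Gamma_H(S)| \ge (1-\varepsilon) D\, |S|$. I would begin by citing this statement in the exact form needed here (which may require absorbing polylogarithmic factors in $K$ into the $\log N$ via $K \le N$).

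Next I would extend the expansion bound to all $m \in (0, N]$. For $m \le K$, the lossless property gives $\Lambda_H(m) \ge (1-\varepsilon) D\, m$ directly. For $m > K$, I would use monotonicity: any $S \subseteq V_L$ with $|S| \ge m > K$ contains some $S_0 \subseteq S$ with $|S_0| = K$, so $|\Gamma_H(S)| \ge |\Gamma_H(S_0)| \ge (1-\varepsilon) D K$. Combining the two regimes yields $\Lambda_H(m) \ge (1-\varepsilon) D \min\{K, m\}$ for all $m$.

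Then I would apply Lemma~\ref{lem:right-regular} to $H$ to obtain an $(N, n, 2D, d)$-right regular graph $G$ with $n \le 2|V_R'|$, right degree $d = \lceil ND/n \rceil$, and $\Lambda_G(m) \ge \Lambda_H(m) \ge (1-\varepsilon) D \min\{K,m\}$. The new left-degree bound $2D$ is still $\lesssim ((\log N)/\varepsilon)^{2+2/c}$, and for the right degree, $d \ge ND/n \gtrsim ND/(DK^{1+c}) = N/K^{1+c} \ge N/(D K^{1+c})$, matching the claimed lower bound.

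The only real obstacle is a bookkeeping one: verifying that the parameters available in \cite{GUV07} — in particular the dependence of the left-degree on $\log K$ and the right-side size on $D$ — can indeed be packaged into the form $D \lesssim ((\log N)/\varepsilon)^{2+2/c}$ and $|V_R'| \lesssim D K^{1+c}$ claimed above (using $K \le N$ to swallow $\log K$ factors into $\log N$). No new ideas beyond this parameter alignment and the right-regularization lemma are needed.
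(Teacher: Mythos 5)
Your proposal is correct and follows exactly the route the paper indicates: the paper's entire ``proof'' of Theorem~\ref{thm:guv-regular} is the one-line remark that it ``essentially follow[s] from \cite{GUV07}\dots after an appropriate application of Lemma~\ref{lem:right-regular},'' and you have spelled out precisely that argument, including the monotonicity extension of the lossless-expansion guarantee to $m > K$ and the right-regularization step. One minor bookkeeping note: the GUV construction bounds the right-side size by roughly $D^2 K^{1+c}$ rather than $D K^{1+c}$, but this still gives $d \gtrsim ND/(D^2 K^{1+c}) = N/(D K^{1+c})$, so your final bound is unaffected.
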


\begin{theorem}[\cite{CRVW02}]
\label{thm:crvw}
For every fixed $0 < \varepsilon < 1$
and all sufficiently large values $N$
and $d$ there exist $n\leq N,\ D \leq 2^{O((\varepsilon^{-1} \log \log d)^3)}$ and
an explicit
$(N,n,D,d)$-right regular bipartite graph $G$
with $\Lambda_G(m) \geq (1-\varepsilon) D \cdot \min \left\{ \Omega(N/d), m \right\}$.
\end{theorem}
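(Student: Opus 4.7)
The plan is to invoke the Capalbo--Reingold--Vadhan--Wigderson explicit lossless expander construction to produce a left-regular bipartite graph with the requisite vertex expansion, and then apply Lemma \ref{lem:right-regular} to convert it into a right-regular graph with right-degree $d$, preserving the expansion profile.

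Concretely, I would set $K = \Theta(N/d)$ (a small constant multiple to be tuned at the end) and invoke the CRVW main theorem with parameters $(N,K,\varepsilon)$. This yields an explicit bipartite graph $H = (V_L, V_R', E')$ with $|V_L| = N$, left-degree $D_0 \le 2^{O((\varepsilon^{-1} \log \log d)^3)}$, $|V_R'| = O(D_0 K)$ right-vertices, and the lossless guarantee $|\Gamma_H(S)| \ge (1-\varepsilon) D_0 |S|$ for every $S \subseteq V_L$ with $|S| \le K$. Since any $S$ of size greater than $K$ contains a subset of size exactly $K$ whose neighborhood sits inside $\Gamma_H(S)$, this immediately upgrades to the profile bound $\Lambda_H(m) \ge (1-\varepsilon) D_0 \cdot \min\{K, m\}$ for every $m \in (0, N]$.

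Next, I apply Lemma \ref{lem:right-regular} to $H$. The lemma produces a graph $G$ with the same left-vertex set $V_L$, maximum left-degree at most $2D_0$, right-degree exactly $\lceil N D_0 / |V_R'|\rceil = \Theta(N/K) = \Theta(d)$, at most $2|V_R'|$ right-vertices, and expansion profile $\Lambda_G(m) \ge \Lambda_H(m)$. Setting $D := 2 D_0$ and folding the $\Theta(\cdot)$ factor on the right-degree into the slack $\Omega(N/d)$ in the conclusion, we obtain the desired $(N,n,D,d)$-right regular graph with $n = O(D_0 K) = O(N D_0 / d) \le N$, the last inequality holding once $d$ exceeds the $O(1)$ value of $D$ (i.e.\ for all sufficiently large $d$, as assumed).

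The main technical nuisance I expect is matching the prescribed right-degree $d$ exactly, since the CRVW construction does not produce arbitrary right-regular graphs: one has to tune $K$ as an integer so that $\lceil N D_0/|V_R'|\rceil$ lands on the right value, or alternatively tolerate a small $O(1)$ multiplicative mismatch and correct it via dummy right-vertices or the padding trick used in the proof of Theorem \ref{thm:spectral}. A secondary, more bookkeeping, task is to trace the CRVW parameters in the regime $K \approx N/d$ to verify the cubic-exponent bound $D_0 = 2^{O((\varepsilon^{-1} \log\log d)^3)}$; this follows from their construction because the number of zig-zag iterations scales as $\varepsilon^{-1}\log\log K$ (and $\log K = \Theta(\log(N/d))$, so $\log\log K = O(\log\log d)$ after absorbing constants), with each iteration contributing a cubic factor to the log-degree.
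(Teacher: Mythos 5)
Your route---invoke the CRVW lossless vertex expander and then right-regularize via Lemma~\ref{lem:right-regular}---is exactly what the paper intends; the paper gives no further proof, remarking only that Theorem~\ref{thm:crvw} ``essentially follows'' from \cite{CRVW02} after an appropriate application of Lemma~\ref{lem:right-regular}, and your proposal fills in that outline, including the correct step that upgrades lossless expansion of sets of size at most $K$ to the profile bound $\Lambda_H(m) \geq (1-\varepsilon)D_0\min\{K,m\}$ for all $m$.

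The one place your bookkeeping goes wrong is the degree estimate. You assert that the CRVW recursion depth scales like $\varepsilon^{-1}\log\log K$ and then infer $\log\log K = O(\log\log d)$ from $\log K = \Theta(\log(N/d))$. That inference is false in general: with $K = \Theta(N/d)$ one has $\log\log K = \Theta(\log\log(N/d))$, and when $d$ is much smaller than $N$ (say $d = \mathrm{polylog}\, N$) this is $\Theta(\log\log N)$, not $O(\log\log d)$. The parameter that actually controls the CRVW degree is the compression from the $N$ left vertices down to the $|V_R'| = \Theta(D_0 K) = \Theta(N D_0/d)$ right vertices, i.e.\ the ratio $N/|V_R'| = \Theta(d/D_0) = O(d)$; it is $\log\log$ of \emph{that} quantity which is $O(\log\log d)$, giving the advertised $D_0 = 2^{O((\varepsilon^{-1}\log\log d)^3)}$. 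A second, more cosmetic, point: Lemma~\ref{lem:right-regular} doubles the left degree, so to report expansion $(1-\varepsilon)D$ with $D = 2D_0$ you should invoke CRVW with $\varepsilon$ suitably halved; this is a harmless recalibration of constants.
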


The main problem for us in both these constructions is that $D$ must
grow with $N$ and $d$, respectively.  By plugging in the explicit
subspaces of Theorem \ref{thm:local-subspace} into Theorem
\ref{thm:pushdown} with the GUV-expanders from Theorem
\ref{thm:guv-regular}, one can achieve distortions $\Delta(X) \approx
\exp(\sqrt{\log N \log \log N})$ for $X \subseteq \mathbb R^N$ with
$\dim(X) \geq N/2$. Using the GUV-expanders (in place of the
sum-product expanders) together with spectral expanders in a
construction similar to the proof of Theorem~\ref{thm:construction}
would yield a distortion bound of $(\log N)^{O(\log\log N)}$.

\item
{\bf Very good expansion for large sets.}
If it were possible to construct an
$(N,n,D,d)$-right regular bipartite graph $H$
with $D = O(1)$ and
such that for every $S \subseteq V_L$
with $|S| \geq N^{1-\beta}$, we had $|\Gamma(S)| = \Omega(n)$,
then we would be able to achieve $O(1)$ distortion using only $O(d^2+N^{\delta})$
random bits for any $\delta > 0$ (in fact, we could use only $O(d+N^{\delta})$ random
bits with \cite{LS07}).

The idea would be to follow the proof of Theorem \ref{thm:pseudorandom},
but only for $O(1)$ steps to show the subspace is $(N^{1-\beta}, \Omega(1))$-spread.
Then we would intersect this with a subspace $X(H,L)$, where $L \subseteq \mathbb R^d$,
with the latter subspace generated as the kernel of a random sign matrix (requiring $d^2$ bits).
Unfortunately, \cite[Th. 1.5]{RT00} shows that in order to achieve
the required expansion property, one has to take $D \geq \Omega(\beta \log N)$.
\end{enumerate}

%\subsection{Compressed sensing}

\subsection*{Acknowledgments}

We are grateful to Avi Wigderson for several enlightening
discussions, and especially his suggestion that the sum-product
expanders of \cite{BIW,BKSSW} should be relevant.
Using the sum-product expanders in place of GUV-expanders
in Section \ref{sec:boost}, we were able to
improve our distortion bound from $(\log N)^{O(\log \log N)}$ to
$(\log N)^{O(\log \log \log N)}$. We are also thankful to an anonymous
referee for several useful remarks.

\bibliographystyle{abbrv}
\bibliography{expsec}

\def\cprime{$'$}
\begin{thebibliography}{10}

\bibitem{AC88}
N.~Alon and F.~R.~K. Chung.
\newblock Explicit construction of linear sized tolerant networks.
\newblock In {\em Proceedings of the First Japan Conference on Graph Theory and
  Applications (Hakone, 1986)}, volume~72, pages 15--19, 1988.

\bibitem{AM06}
S.~Artstein-Avidan and V.~D. Milman.
\newblock Logarithmic reduction of the level of randomness in some
  probabilistic geometric constructions.
\newblock {\em J. Funct. Anal.}, 235(1):297--329, 2006.

\bibitem{BIW}
B.~Barak, R.~Impagliazzo, and A.~Wigderson.
\newblock Extracting randomness using few independent sources.
\newblock {\em SIAM Journal on Computing}, 36(4):1095--1118, 2006.

\bibitem{BKSSW}
B.~Barak, G.~Kindler, R.~Shaltiel, B.~Sudakov, and A.~Wigderson.
\newblock Simulating independence: {N}ew constructions of condensers, {R}amsey
  graphs, dispersers, and extractors.
\newblock In {\em Proceedings of the 46th ACM Symposium on Theory of
  Computing}, pages 1--10, 2005.

\bibitem{BKT03}
J.~Bourgain, N.~Katz, and T.~Tao.
\newblock A sum-product estimate in finite fields, and applications.
\newblock {\em Geom. Funct. Anal.}, 14(1):27--57, 2004.

\bibitem{BK03}
J.~Bourgain and S.~V. Konyagin.
\newblock Estimates for the number of sums and products and for exponential
  sums over subgroups in fields of prime order.
\newblock {\em C. R. Math. Acad. Sci. Paris}, 337(2):75--80, 2003.

\bibitem{cameron-seidel}
P.~J. Cameron and J.~J. Seidel.
\newblock Quadratic forms over {GF}(2).
\newblock {\em Indag. Math.}, 35:1--8, 1973.

\bibitem{CRVW02}
M.~R. Capalbo, O.~Reingold, S.~P. Vadhan, and A.~Wigderson.
\newblock Randomness conductors and constant-degree lossless expanders.
\newblock In {\em Proceedings of the 34th ACM Symposium on Theory of
  Computing}, pages 659--668, 2002.

\bibitem{DeVore07}
R.~A. DeVore.
\newblock Deterministic constructions of compressed sensing matrices.
\newblock Manuscript, 2007.

\bibitem{donoho}
D.~L. Donoho.
\newblock Compressed sensing.
\newblock {\em IEEE Transactions on Information Theory}, 52:1289--1306, 2006.

\bibitem{DS89}
D.~L. Donoho and P.~B. Stark.
\newblock Uncertainty principles and signal recovery.
\newblock {\em SIAM J. Appl. Math.}, 49(3):906--931, 1989.

\bibitem{ErdosBertrand}
P.~Erd\"os.
\newblock A theorem of {S}ylvester and {S}chur.
\newblock {\em J. London Math. Soc.}, 9:282--288, 1934.

\bibitem{FLM77}
T.~Figiel, J.~Lindenstrauss, and V.~D. Milman.
\newblock The dimension of almost spherical sections of convex bodies.
\newblock {\em Acta Math.}, 139(1-2):53--94, 1977.

\bibitem{gallager-thesis}
R.~G. Gallager.
\newblock {\em Low-Density Parity-Check Codes}.
\newblock MIT Press, 1963.

\bibitem{garnaev-gluskin}
A.~Garnaev and E.~D. Gluskin.
\newblock The widths of {E}uclidean balls.
\newblock {\em Doklady An. SSSR.}, 277:1048–--1052, 1984.

\bibitem{GLW08}
V.~Guruswami, J.~Lee, and A.~Wigderson.
\newblock Euclidean sections of with sublinear randomness and error-correction
  over the reals.
\newblock In {\em 12th International Wrokshop on Randomization and
  Combinatorial Optimization: Algorithms and Techniques (RANDOM)}, pages
  444--454, 2008.

\bibitem{GUV07}
V.~Guruswami, C.~Umans, and S.~P. Vadhan.
\newblock Unbalanced expanders and randomness extractors from
  {P}arvaresh-{V}ardy codes.
\newblock In {\em Proceedings of the 22nd Annual IEEE Conference on
  Computational Complexity}, pages 96--108, 2007.

\bibitem{SLW06}
S.~Hoory, N.~Linial, and A.~Wigderson.
\newblock Expander graphs and their applications.
\newblock {\em Bull. Amer. Math. Soc. (N.S.)}, 43(4):439--561, 2006.

\bibitem{Indyk1}
P.~Indyk.
\newblock Stable distributions, pseudorandom generators, embeddings, and data
  stream computation.
\newblock {\em Journal of the ACM}, 53(3):307--323, 2006.

\bibitem{Indyk2}
P.~Indyk.
\newblock Uncertainty principles, extractors, and explicit embeddings of
  ${L_1}$ into ${L_2}$.
\newblock In {\em Proceedings of the 39th Annual ACM Symposium on the Theory of
  Computing}, pages 615--620, 2007.

\bibitem{IndykCS}
P.~Indyk.
\newblock Explicit constructions for compressed sensing of sparse signals.
\newblock In {\em Proceedings of the 19th Annual ACM-SIAM Symposium on Discrete
  Algorithms}, pages 30--33, 2008.

\bibitem{JShandbook}
W.~B. Johnson and G.~Schechtman.
\newblock Finite dimensional subspaces of {$L\sb p$}.
\newblock In {\em Handbook of the geometry of Banach spaces, Vol. I}, pages
  837--870. North-Holland, Amsterdam, 2001.

\bibitem{kasin}
B.~S. Kashin.
\newblock The widths of certain finite-dimensional sets and classes of smooth
  functions.
\newblock {\em Izv. Akad. Nauk SSSR Ser. Mat.}, 41(2):334--351, 478, 1977.

\bibitem{KT07}
B.~S. Kashin and V.~N. Temlyakov.
\newblock A remark on compressed sensing.
\newblock Available at \verb|http://www.dsp.ece.rice.edu/cs/KT2007.pdf|, 2007.

\bibitem{kerdock}
A.~M. Kerdock.
\newblock A class of low-rate nonlinear binary codes.
\newblock {\em Inform. Control}, 20:182--187, 1972.

\bibitem{LLR95}
N.~Linial, E.~London, and Y.~Rabinovich.
\newblock The geometry of graphs and some of its algorithmic applications.
\newblock {\em Combinatorica}, 15(2):215--245, 1995.

\bibitem{LS07}
S.~Lovett and S.~Sodin.
\newblock Almost {E}uclidean sections of the ${N}$-dimensional cross-polytope
  using ${O(N)}$ random bits.
\newblock Electronic Colloquium on Computational Complexity, Report TR07-012,
  2007.

\bibitem{LPS88}
A.~Lubotzky, R.~Phillips, and P.~Sarnak.
\newblock Ramanujan graphs.
\newblock {\em Combinatorica}, 8(3):261--277, 1988.

\bibitem{Mac-Sloane}
F.~J. MacWilliams and N.~J.~A. Sloane.
\newblock {\em The Theory of Error-Correcting Codes}.
\newblock North-Holland, 1977.

\bibitem{MilmanGAFA}
V.~Milman.
\newblock Topics in asymptotic geometric analysis.
\newblock {\em Geom. Funct. Anal.}, (Special Volume, Part II):792--815, 2000.
\newblock GAFA 2000 (Tel Aviv, 1999).

\bibitem{RT00}
J.~Radhakrishnan and A.~Ta-Shma.
\newblock Bounds for dispersers, extractors, and depth-two superconcentrators.
\newblock {\em SIAM J. Discrete Math.}, 13(1):2--24 (electronic), 2000.

\bibitem{Rudin60}
W.~Rudin.
\newblock Trigonometric series with gaps.
\newblock {\em J. Math. Mech.}, 9:203--227, 1960.

\bibitem{Sch81}
G.~Schechtman.
\newblock Random embeddings of {E}uclidean spaces in sequence spaces.
\newblock {\em Israel J. Math.}, 40(2):187--192, 1981.

\bibitem{schutt84}
C.~Sch{\"u}tt.
\newblock Entropy numbers of diagonal operators between symmetric {B}anach
  spaces.
\newblock {\em J. Approx. Theory}, 40(2):121--128, 1984.

\bibitem{SS96}
M.~Sipser and D.~A. Spielman.
\newblock Expander codes.
\newblock {\em IEEE Trans. Inform. Theory}, 42(6, part 1):1710--1722, 1996.
\newblock Codes and complexity.

\bibitem{SzarekICM}
S.~Szarek.
\newblock Convexity, complexity, and high dimensions.
\newblock In {\em International Congress of Mathematicians. Vol. II}, pages
  1599--1621. Eur. Math. Soc., Z\"urich, 2006.

\bibitem{tanner}
R.~M. Tanner.
\newblock A recursive approach to low complexity codes.
\newblock {\em IEEE Transactions on Information Theory}, 27(5):533--547, 1981.

\bibitem{TVbook}
T.~Tao and V.~Vu.
\newblock {\em Additive combinatorics}, volume 105 of {\em Cambridge Studies in
  Advanced Mathematics}.
\newblock Cambridge University Press, Cambridge, 2006.

\bibitem{zemor}
G.~Z\'{e}mor.
\newblock On expander codes.
\newblock {\em IEEE Transactions on Information Theory}, 47(2):835--837, 2001.

\end{thebibliography}
\end{document}